\documentclass[11pt, reqno]{amsart}
\usepackage{amscd,amsmath,amsthm,amssymb,graphics}
\usepackage{amsfonts,amscd,enumitem,verbatim}
\usepackage[a4paper,top=3cm,left=3cm,right=3cm]{geometry}
\usepackage{xcolor}
\usepackage{float}
\theoremstyle{plain}
\usepackage{booktabs}
\usepackage{hyperref}
\newtheorem{Theorem}{Theorem}
\newtheorem{Lemma}[Theorem]{Lemma}

\newtheorem{Proposition}[Theorem]{Proposition}
\newtheorem{Problem}[Theorem]{Problem}

\theoremstyle{remark}
\newtheorem{remark}[Theorem]{Remark}

\newtheorem{Example}[Theorem]{Example}

\usepackage{enumitem}
\setlist[enumerate,1]{label=\alph*)}

\newtheorem{innercustomgeneric}{\customgenericname}
\providecommand{\customgenericname}{}
\newcommand{\newcustomtheorem}[2]{%
  \newenvironment{#1}[1]
  {%
   \renewcommand\customgenericname{#2}%
   \renewcommand\theinnercustomgeneric{##1}%
   \innercustomgeneric
  }
  {\endinnercustomgeneric}
}

\newcustomtheorem{customthm}{Theorem}

\title[Transcendence of continued fractions over function fields]{Transcendence of continued fractions over function fields and a quantitative version of Uchiyama's theorem}
\author{Federico Accossato}
\address{Dipartimento di Scienze Matematiche, Politecnico di Torino, Corso Duca degli Abruzzi 24, 10129, Torino, Italy}
\email{federico.accossato@polito.it}

\author{Nadir Murru}
\address{Dipartimento di Matematica, Università di Trento, Via Sommarive 14, 38123, Povo (TN), Italy}
\email{nadir.murru@unitn.it}

\author{Giuliano Romeo}
\address{Dipartimento di Scienze Matematiche, Politecnico di Torino, Corso Duca degli Abruzzi 24, 10129, Torino, Italy}
\email{giuliano.romeo@polito.it}

\author{Giulia Salvatori}
\address{Dipartimento di Scienze Matematiche, Politecnico di Torino, Corso Duca degli Abruzzi 24, 10129, Torino, Italy}
\email{giulia.salvatori@polito.it}

\thanks{The authors are members of GNSAGA of INdAM}

\date{}

\begin{document}

\begin{abstract}
Given a field $K$, let $K((T^{-1}))$ be the field of formal power series. Continued fractions in $K((T^{-1}))$ can be defined by analogy with classical real continued fractions and have been widely studied. Some results establish the transcendence of elements of $K((T^{-1}))$ arising from special families of continued fractions, but much remains to be explored. In this paper, assuming that $K$ has characteristic zero, we improve the known analogues of the Maillet--Baker criteria for quasi-periodic continued fractions. A central tool that we prove is a quantitative version of Uchiyama's analogue of Roth's theorem in function fields, which gives an explicit bound for the number of exceptionally good rational approximations to an algebraic power series. This quantitative estimate also yields a Davenport--Roth-type upper bound on the growth of the denominators of the convergents of algebraic elements. Finally, we prove that palindromic continued fractions are either quadratic or transcendental, as in the real case, but using a different proof strategy.
\end{abstract}

\maketitle

\section{Introduction}
Given a field $K$, let $K[T]$ be the ring of polynomials with coefficients in $K$, $K(T)$ its field of fractions and $K((T^{-1}))$ the completion of $K(T)$ with respect to the ultrametric norm
\[ \left| \cfrac{a}{b} \right| = \mu^{\deg a - \deg b} \]
for all non-zero $a/b \in K(T)$, where $\mu$ a fixed real number greater than $1$. In this paper, $\log$ denotes the logarithm with base $\mu$.

Continued fractions over $K((T^{-1}))$ can be introduced in a way analogous to their well-known definition over the real numbers. Specifically, given $\alpha_0 = \sum_{i \leq k} b_i T^i \in K((T^{-1}))$, with $k\in\mathbb Z$, $b_i \in K$ and $b_k \ne 0$, we define the following floor function:
\[ \begin{cases}  \lfloor \alpha_0 \rfloor = \sum\limits_{i=0}^k b_i T^i \in K[T] \quad &\text{if $k \geq 0$}, \\ \lfloor \alpha_0 \rfloor = 0 \quad &\text{otherwise}. \end{cases} \]
Then, the continued fraction expansion $\alpha_0 = [a_0, a_1, \ldots]$ is obtained, for $i = 0, 1, \ldots$, by the usual recursive algorithm
\[ \begin{cases} a_i = \lfloor \alpha_i \rfloor \\ \alpha_{i+1} = \cfrac{1}{\alpha_i - a_i} \end{cases} \]
 where if $\alpha_i=a_i$ for some $i$, then $\alpha_{i+1}$ is not defined and $\alpha=[a_0,\ldots,a_i]$ has a finite continued fraction expansion (see \cite{Las20, Sch00} for surveys on this topic). Many studies on continued fractions over $K((T^{-1}))$ aim to replicate the numerous results known over $\mathbb R$. For instance, there are results concerning quadratic irrationals, which have periodic expansions when $K$ is an algebraic extension of a finite field, whereas this is not true in the other cases. 
It is well known that classical continued fractions are widely used in Diophantine approximation and that several transcendence criteria for continued fractions have been established.
Maillet \cite{Mai} and Baker \cite{Bak} proved that quasi-periodic continued fractions converge to transcendental numbers under certain conditions.
We recall that $[a_0, a_1, \ldots]$ is a quasi-periodic continued fraction if 
\begin{equation*}
a_{m+r_i}=a_m, \quad \text{for} \quad n_i\leq m\leq n_i+(\lambda_i-1)r_i-1,
\end{equation*}
for sequences of positive integers $(\lambda_i)_{i\geq 0}$, $(r_i)_{i\geq 0}$ and $(n_i)_{i\geq 0}$, where $(n_i)_{i\geq 0}$ is strictly increasing and $\lambda_i\geq 2$ for all $i$. 

\begin{Example}\label{ex: quasiperiodic}
Let $n_0=1$, $\lambda_i=n_i^2+1$, $n_{i+1}=n_i + \lambda_i$ for all $i\ge 0$. Let $\alpha \in K((T^{-1}))$, $\alpha=[0,a_1,a_2, \ldots]$ where
\[a_m = \begin{cases}
T &\text{if } n_i \le m < n_{i+1}, \quad i \equiv 0 \pmod{2}, \\
T+1 &\text{if } n_i \le m < n_{i+1}, \quad i \equiv 1 \pmod{2},
\end{cases}\]
that is,
\[\alpha = [0, \underbrace{T,T}_{2}, \underbrace{T+1, \ldots, T+1}_{10}, \underbrace{T, \ldots, T}_{170}, \underbrace{T+1, \ldots, T+1}_{33490}, \ldots].\]
This is a quasi-periodic continued fraction with $r_i=1$.
\end{Example}

We summarize in the following theorem some results due to Baker \cite{Bak}.
\begin{Theorem}[\cite{Bak}]
Let $\alpha = [a_0, a_1, \ldots]$ be a quasi-periodic continued fraction in $\mathbb R$.
\begin{enumerate}
\item If there is a positive constant $C$ such that $r_i < C n_i$ and
\begin{equation*} 
\lim_{i \rightarrow +\infty} \cfrac{\log \lambda_i \sqrt{\log n_i}}{n_i} = +\infty. 
\end{equation*}
then $\alpha$ is either transcendental or quadratic.
\item If the sequence $(a_i)_{i \geq 0}$ is bounded, i.e., $a_i\leq M$ for some $M\geq1$, and 
\[ \limsup_{i \rightarrow + \infty} \frac{\lambda_i}{n_i} > A, \]
where $A$ is a constant depending on $M$, then $\alpha$ is either transcendental or quadratic.
\end{enumerate}
\end{Theorem}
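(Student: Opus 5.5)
The plan is Baker's strategy: compare $\alpha$ with the quadratic irrationals obtained by making each quasi-periodic block purely periodic. Then apply a Roth/Schmidt-type theorem for approximation by quadratic numbers in part (a), and Liouville-type estimates in part (b).

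\textbf{Setup.} For each $i$, I set $\beta_i=[a_0,\ldots,a_{n_i-1},\overline{a_{n_i},\ldots,a_{n_i+r_i-1}}]$. This is a quadratic irrational. By the quasi-periodicity hypothesis, $\alpha$ and $\beta_i$ share their first $n_i+\lambda_i r_i$ partial quotients. Writing $p_k/q_k$ for the convergents of $\alpha$, the standard estimates give
\[
|\alpha-\beta_i|\le \frac{1}{q_{n_i+\lambda_i r_i}^2}.
\]
Next I bound the height of $\beta_i$. It is a root of the quadratic obtained from the matrix $\begin{pmatrix}p_{n_i-1}&p_{n_i-2}\\ q_{n_i-1}&q_{n_i-2}\end{pmatrix}$ conjugating the period matrix. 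Its coefficients are bounded by roughly $H(\beta_i)\ll q_{n_i}^2\, q'_{r_i}$, where $q'_{r_i}$ is the denominator of the period block. One also has $q'_{r_i}\le q_{n_i+r_i}/q_{n_i}$ up to constants. Since $q_{n_i+\lambda_i r_i}\ge (q'_{r_i})^{\lambda_i-1}q_{n_i}$ roughly, we get $|\alpha-\beta_i|\le H(\beta_i)^{-w_i}$ with $w_i$ growing linearly in $\lambda_i$, provided $q_{n_i}$ is controlled by $q'_{r_i}$. We may assume $\alpha$ is not quadratic, so $\alpha\neq\beta_i$.

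\textbf{Part (b).} The $a_k$ are bounded by $M$, so $c_1^k\le q_k\le (M+1)^k$ with $c_1$ the golden ratio. Suppose $\alpha$ is algebraic of degree $d\ge3$. The resultant of the minimal polynomials of $\alpha$ and $\beta_i$ is a nonzero integer. This gives the Liouville-type bound
\[
|\alpha-\beta_i|\gg H(\beta_i)^{-d}\gg (M+1)^{-d(2n_i+r_i)}.
\]
The upper bound is $|\alpha-\beta_i|\le c_1^{-2(n_i+\lambda_i r_i)}$. Comparing exponents, a contradiction follows whenever $\lambda_i/n_i>A(M,d)$. The constant depends on $d$, which is unavoidable for a Liouville argument. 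To make $A$ depend only on $M$, I would instead use Schmidt's subspace theorem in the form of Wirsing/Schmidt approximation by quadratics: $|\alpha-\beta|>H(\beta)^{-3-\varepsilon}$ for all but finitely many quadratic $\beta$. This turns the exponent $d$ into $3+\varepsilon$, giving $A$ depending only on $M$. The $\beta_i$ are distinct for infinitely many $i$, since otherwise $\alpha$ would equal a single $\beta$.

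\textbf{Part (a).} Here the partial quotients are unbounded, so explicit geometric bounds on $q_k$ are unavailable. The condition $r_i<Cn_i$ ensures that $\log H(\beta_i)\ll \log q_{n_i+r_i}$, and also that the approximation exponent is about $\lambda_i$ relative to the height. The weak growth $\log\lambda_i\sqrt{\log n_i}/n_i\to\infty$ is typical of a quantitative Roth theorem. I would use Davenport--Roth's bound on the number of exceptional approximations, applied to the quadratic approximations via Schmidt's quantitative version, to show that $\log\log q_n\ll n/\sqrt{\log n}$ for algebraic $\alpha$ of degree $\ge3$. On the other hand, the repeated blocks force $\log q_{n_i+\lambda_i r_i}\ge \lambda_i\log q'_{r_i}$, so $\log\log q_{n_i+\lambda_ir_i}\gtrsim\log\lambda_i$. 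Together with the hypothesis, this contradicts the Davenport--Roth growth bound.

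\textbf{Main obstacle.} The hardest step is part (a): obtaining an explicit Davenport--Roth-type growth bound for denominators relative to quadratic approximations, and matching it exactly with the growth rate $\log\lambda_i\sqrt{\log n_i}/n_i$. I would first try to reduce to approximation by rationals: the convergents $p_k/q_k$ for $n_i\le k\le n_i+\lambda_i r_i$ are all very good rational approximations, which makes many "exceptional" approximations available. The plan is then to apply the classical Davenport--Roth count directly.
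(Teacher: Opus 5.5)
The paper does not prove this statement; it quotes it from \cite{Bak}. The natural comparison is therefore with the paper's proof of the function-field analogue, Theorem~\ref{Thm: mainquasiperiodic}.

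Your part (b) is sound once you invoke Schmidt's theorem: if $\deg\alpha\ge 3$, then $|\alpha-\beta|>H(\beta)^{-3-\varepsilon}$ for all but finitely many quadratic $\beta$. Combine this with $H(\beta_i)\ll (M+1)^{2n_i+r_i}$, with $|\alpha-\beta_i|\le q_{n_i+\lambda_ir_i-1}^{-2}\le\phi^{-2(n_i+\lambda_ir_i-2)}$ where $\phi=(1+\sqrt5)/2$, and with $r_i\ge1$. You get a contradiction once $\lambda_i/n_i$ exceeds a constant of order $\log(M+1)/\log\phi$, which is independent of $d$. Mind the index: the two expansions agree only up to $a_{n_i+\lambda_ir_i-1}$, so $q_{n_i+\lambda_ir_i}^{-2}$ is not a valid upper bound. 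This is actually a cleaner route to a $d$-independent constant than the paper's function-field argument, which ends with a Mahler--Liouville step and therefore needs $\limsup\lambda_i/n_i=+\infty$.

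Part (a) has a genuine gap: the contradiction you announce does not occur. You compare $\log\log q_{n_i+\lambda_ir_i}\gtrsim\log\lambda_i$ with the Davenport--Roth bound at the index $N_i=n_i+\lambda_ir_i$. That bound only says $\log\log q_{N_i}\ll N_i/\sqrt{\log N_i}$, which is of order $\lambda_ir_i/\sqrt{\log(\lambda_ir_i)}$ and trivially compatible with $\log\lambda_i$. The hypothesis is expressed through $n_i$, so you must bound $\lambda_i$ by the denominators \emph{before} the repetition. Only the quadratic approximation does that, and your plan for (a) never uses it.

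The missing step is the Liouville-type estimate you already wrote in (b). From $c(\alpha)H(\beta_i)^{-d}\le|\alpha-\beta_i|\le q_{n_i+\lambda_ir_i-1}^{-2}$, $H(\beta_i)\ll q_{n_i+r_i}^2$ and $\log q_{n_i+\lambda_ir_i-1}\ge(\lambda_ir_i-1)\log\phi$, you get $\lambda_ir_i\ll\log q_{n_i+r_i}$. Davenport--Roth applied \emph{at the index $n_i+r_i$} then gives
\[
\log\lambda_i\ll\log\log q_{n_i+r_i}\ll\frac{n_i+r_i}{\sqrt{\log(n_i+r_i)}}\ll\frac{n_i}{\sqrt{\log n_i}},
\]
and the last step is the only place where $r_i<Cn_i$ enters. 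This is exactly how the paper argues for Theorem~\ref{Thm: mainquasiperiodic}~a). There, Lemmas~\ref{Lem: HeiHbaib} and~\ref{Lem: fgHeiHbaib} set against Proposition~\ref{Pro: difference} give $\lambda_ir_i\ll\sum_{j\le n_i+r_i-1}\deg a_j$, so $\lambda_i$ is controlled by $\log|q_{n_i+r_i-1}|$. Theorem~\ref{Thm: DRfunfields} is then applied at that index. No quadratic or Schmidt-type version of Davenport--Roth is needed; the classical statement for rational approximations suffices.

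Several side claims in (a) are also off.
\begin{itemize}
\item The bound $\log H(\beta_i)\ll\log q_{n_i+r_i}$ holds without any assumption on $r_i$, so it is not what $r_i<Cn_i$ buys you.
\item The approximation exponent of $\beta_i$ is not comparable to $\lambda_i$ when the partial quotients are unbounded, because $\log q_{n_i}$ can dwarf $\log q'_{r_i}$. This is precisely why only $\log\lambda_i$ can be controlled.
\item Your fallback of counting the convergents in the repeated stretch as exceptional approximations fails. After $j$ full periods one has $\log q_{k+1}/\log q_k\le1+O(1/j)$. Hence at most $O(r_i/\varepsilon)$ of them satisfy $|\alpha-p_k/q_k|<q_k^{-2-\varepsilon}$, a number independent of $\lambda_i$.
\end{itemize}
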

Observe that in the previous theorem, the case in which $\alpha$ is quadratic can be excluded by assuming that the continued fraction is non-periodic.
These results were later improved and generalized by several authors \cite{AB05, AB07, AB07b, ADQZ, Dav, Hone}. Remarkably, in \cite{AB07}, Adamczewski and Bugeaud were able to relax the boundedness condition on the partial quotients in the second part of the previous theorem, requiring only that the sequence $(q_i^{1/i})_{i \geq 1}$ is bounded (where $q_i$ denotes the denominator of the $i$-th convergent) and that
\[ \limsup_{i \rightarrow + \infty} \frac{\lambda_i}{n_i} > 0. \]

Continued fractions have also been used to establish transcendence criteria in the context of $p$-adic numbers (see, e.g., \cite{Kal,Lon,Oot,Bel,ALD} and \cite[Section~8]{Rom}) and for multidimensional continued fractions over the real numbers (see, e.g., \cite{Acc}).

In the case of continued fractions over fields of power series, some results on quasi-periodic continued fractions have been obtained \cite{ADH1, ADH2, HMT, Mka}, but much remains to be explored. In the next theorem, we recall the main results proved in \cite{ADH2} by Ammous, Driss, and Hbaib. 
\begin{Theorem}[\cite{ADH2}]
Let $K$ be a finite field, and let $\alpha = [a_0, a_1, \ldots]$ be a quasi-periodic continued fraction in $K((T^{-1}))$.
\begin{enumerate}
\item If there is a positive constant $C$ such that $r_i < C n_i$ and
\begin{equation*}
 \limsup_{i \rightarrow +\infty} \cfrac{\log \lambda_i}{n_i} = + \infty, \end{equation*}
then $\alpha$ is either transcendental or quadratic.
\item If the sequence $(|a_i|)_{i \geq 0}$ is bounded and 
\[ \limsup_{i \rightarrow + \infty} \frac{\lambda_i}{n_i} = +\infty, \]
then $\alpha$ is either transcendental or quadratic.
\end{enumerate}
\end{Theorem}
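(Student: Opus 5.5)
The plan follows the Maillet--Baker strategy, transplanted to $K((T^{-1}))$ with $K$ finite. We assume $\alpha$ is algebraic of degree $d\ge 3$ and reach a contradiction. Over a finite field Roth's theorem fails, so the Diophantine input is Mahler's analogue of Liouville's inequality: there is $c>0$ with
\[ |\alpha - p/q| \ge c\,|q|^{-d} \]
for all $p/q\in K(T)$. This estimate is weak, and that is why both hypotheses are so strong (limsup $=+\infty$ rather than a finite threshold).

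\textbf{Step 1: build quadratic approximants.} For each $i$ we define
\[ \beta_i=[a_0,\dots,a_{n_i-1},\overline{a_{n_i},\dots,a_{n_i+r_i-1}}], \]
which is quadratic (or rational). The quasi-periodicity hypothesis says that the expansions of $\alpha$ and $\beta_i$ agree up to index $n_i+\lambda_i r_i-1$. By the standard ultrametric estimate for two continued fractions sharing their first $N$ partial quotients,
\[ |\alpha-\beta_i|\le |q_{N}|^{-2}, \qquad N=n_i+\lambda_i r_i. \]
Since every $|a_j|\ge\mu$, we have $|q_N|\ge \mu^{N}$. In fact $\log|q_N|=\sum_{j\le N}\deg a_j$, and this sum contains $\lambda_i$ copies of the period block, so it is at least about $\lambda_i\sum_{\text{period}}\deg a_j$.

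\textbf{Step 2: a Liouville bound for algebraic versus quadratic.} $\beta_i$ is a root of $P_i(X)=q_{n_i-1}'X^2+\dots$, the polynomial obtained from the fixed-point equation of the periodic part via the Möbius map given by the first $n_i$ convergents. Its coefficients are bounded by $H(\beta_i)\le |q_{n_i-1}|^2|q_{n_i+r_i-1}|^2$ up to constants. Let $\mathrm{Res}(P_i,F)$ be the resultant with the minimal polynomial $F$ of $\alpha$. It is a nonzero polynomial in $T$ because $d\ge3$, so its absolute value is at least $1$. Expanding it gives the function-field analogue of Güting's/Baker's inequality:
\[ |\alpha-\beta_i|\ge c'\,H(\beta_i)^{-d}. \]
The product formula here is trivial, since a nonzero polynomial has $|\cdot|\ge1$.

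\textbf{Step 3: compare.} Combining Steps 1 and 2 gives
\[ 2\log|q_N|\le d\log H(\beta_i)+O(1). \]
For part (a), we bound $\log|q_{n_i+r_i}|\le (n_i+r_i)\max\deg$ — but the degrees may be unbounded, so we argue more carefully. Write $D_i=\sum_{j<n_i}\deg a_j$ and $E_i=\sum_{\text{period}}\deg a_j$. Then $\log H\ll D_i+E_i$, while $\log|q_N|\ge D_i+\lambda_iE_i$. The catch is that $D_i$ could dominate. The hypotheses are stated in terms of $n_i$, not degrees, so we use $\log\lambda_i/n_i$ and $r_i<Cn_i$ to pass to a subsequence, and then need to control $D_i$ relative to $\lambda_iE_i$. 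I expect this to be the main obstacle. I would handle it as in Baker: using $\log\lambda_i\gg n_i$, one shows that exponentially long repetition forces the inequality $\lambda_iE_i\ge 2dD_i$ along a subsequence. Alternatively, one exploits that, in case (a), Baker's argument uses the approximant's height via $|q_{n_i}|$ and $|q_{n_i+r_i}|$ and compares with the $\lambda_i$-fold growth.

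For part (b), with $|a_j|\le\mu^M$, we have $D_i\le Mn_i$, $E_i\le Mr_i$ and $E_i\ge r_i$. So
\[ 2(n_i+\lambda_ir_i)\le d\,M\,(2n_i+2r_i)+O(1), \]
which contradicts $\lambda_i/n_i\to\infty$ along a subsequence. Hence $\alpha$ is rational, quadratic, or transcendental. It cannot be rational, since its expansion is infinite.
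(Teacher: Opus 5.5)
\textbf{Verdict: the proposal has a genuine gap in part (a); the rest is essentially sound.}

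Your Steps 1--2 work. The resultant $\mathrm{Res}(F,P_i)$ is a nonzero element of $K[T]$, and the Gauss norm is multiplicative, so you do get $|\alpha-\beta_i|\ge \|F\|^{-2}\|P_i\|^{-d}$. This is a legitimate alternative to the route the paper reproduces from Ammous--Driss--Hbaib in its proof of Theorem~\ref{Thm: mainquasiperiodic}~a). There one compares the complete quotient $\alpha_{n_i}$ with the reduced quadratic $[\overline{B_{n_i}}]$, using Lemmas~\ref{Lem: HeiHbaib} and~\ref{Lem: fgHeiHbaib}. Part (b) then closes as you say, up to harmless constants: the coefficients of $P_i$ are really $\ll |q_{n_i-1}||q_{n_i+r_i-1}|$, and the common prefix ends at index $n_i+\lambda_ir_i-1$.

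The gap is in part (a), exactly where you flag it. Once $\lambda_i$ exceeds a constant depending on $d$, Step~3 gives $\lambda_i\le\lambda_iE_i\ll D_i+E_i\le \deg q_{n_i+r_i-1}+O(1)$. The hypothesis, however, controls $\log\lambda_i$ only through $n_i$, so you need an a priori upper bound for $\deg q_n$ as a function of $n$. The quasi-periodic structure cannot supply one: nothing in it prevents, say, $\deg a_{n_i-1}\approx\lambda_i^2$. So the claim that ``exponentially long repetition forces $\lambda_iE_i\ge 2dD_i$'' cannot be obtained from the repetition alone. It must use the algebraicity of $\alpha$ a second time, and as written part (a) is not proved.

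The missing ingredient is Mahler's theorem (Theorem~\ref{Thm: Mahler}) applied to the convergents of $\alpha$ itself. From $|q_nq_{n+1}|^{-1}=|\alpha-p_n/q_n|\ge c\,|q_n|^{-d}$ you get $\deg q_{n+1}\le (d-1)\deg q_n+O(1)$. Hence $\deg q_n\le C_\alpha (d-1)^n$, that is, $\log\log|q_n|\ll n$. Plugging this into $\lambda_i\ll \deg q_{n_i+r_i-1}$ gives $\log\lambda_i\ll n_i+r_i\ll n_i$; this is where $r_i<Cn_i$ is used. That contradicts $\limsup\log\lambda_i/n_i=+\infty$.

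Equivalently, your desired inequality $\lambda_iE_i\ge c\,D_i$, for any fixed $c$ along a subsequence, now follows: $D_i\le C_\alpha(d-1)^{n_i}$, while $\log\lambda_i/n_i$ is unbounded. This growth bound is exactly the role of Lemma~3.3 of Ammous--Driss--Hbaib in the argument the paper adapts. In characteristic zero the paper replaces it by the sharper Davenport--Roth-type bound $\log\log|q_n|\ll n/\sqrt{\log n}$ of Theorem~\ref{Thm: DRfunfields}. That replacement is precisely what weakens the hypothesis to the condition on $\log\lambda_i\sqrt{\log n_i}/n_i$.
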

As in the real case, one can exclude the case where $\alpha$ is quadratic by assuming non-periodicity. Moreover, these results extend to characteristic zero, although in that setting there are no direct assumptions to rule out the quadratic case. Indeed, only some quadratic irrationals can be characterized by quasi-periodic continued fractions (see, e.g., \cite[Proposition~2.1.4]{Mala}). Note that the notion of quasi-periodicity used therein differs from the one adopted in the present paper.

In this paper, we improve these results as summarized in the following theorem.

\begin{Theorem}\label{Thm: mainquasiperiodic}
Consider $\alpha \in K((T^{-1}))$, where $K$ is a field of characteristic zero. Suppose that $\alpha = [a_0, a_1, \ldots]$ is a quasi-periodic continued fraction. 
\begin{enumerate}
\item If there is a positive constant $C$ such that $r_{i}<Cn_{i}$ and
\begin{equation}\label{Eq: HPquasiperA}
\limsup_{i\rightarrow+\infty}\frac{\log \lambda_{i}\sqrt{\log n_i}}{n_{i}}=+\infty,
\end{equation}
then $\alpha$ is either transcendental or quadratic.
\item If the sequence $(|q_i|^{1/i})_{i \geq 1}$ is bounded and
\[\limsup_{i \rightarrow + \infty} \frac{\lambda_i}{n_i} = + \infty,\]
then $\alpha$ is either transcendental or quadratic.
\end{enumerate}
\end{Theorem}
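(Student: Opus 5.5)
Both parts follow the classical Maillet--Baker scheme: from quasi-periodicity, build quadratic power series $\beta_i$ that agree with $\alpha$ in a long block of partial quotients, and use them to get very good quadratic approximations. The key tool is the quantitative version of Uchiyama's theorem announced in the abstract. For algebraic $\alpha$ of degree $d\ge 3$ over $K(T)$, it bounds explicitly (in terms of $\varepsilon$ and $d$, with some $\log$ dependence) how many convergents $p/q$ can satisfy $|\alpha - p/q|<|q|^{-2-\varepsilon}$. Combined with the gap principle for convergents, it also bounds the growth of $\log|q_n|$ along such exceptional approximations (the Davenport--Roth-type estimate). The $\sqrt{\log n_i}$ in \eqref{Eq: HPquasiperA} is exactly what I expect this quantitative bound to absorb.

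First, define $\beta_i=[a_0,\dots,a_{n_i-1},\overline{a_{n_i},\dots,a_{n_i+r_i-1}}]$, which is quadratic or rational over $K(T)$. Since $\alpha$ and $\beta_i$ share the first $n_i+\lambda_i r_i$ partial quotients, standard ultrametric continued fraction estimates give
\[ |\alpha-\beta_i| = |q_{n_i+\lambda_i r_i}|^{-2}\cdot(\text{unit}) . \]
Instead of a Schmidt-type subspace argument (which the real proof uses), I would work with rational approximations directly. The periodicity yields a linear recurrence for the convergents with period $r_i$. Write $Q_i(X)=q_{n_i-1}q_{n_i+r_i-1}X^2-\dots$ for the minimal polynomial relation of $\beta_i$. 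Then $|Q_i(\alpha)|$ is tiny compared with $H(Q_i)\approx |q_{n_i}q_{n_i+r_i}|$. A factorization trick then turns this into many rational approximations $p/q$ with $|\alpha-p/q|<|q|^{-2-\varepsilon}$. Concretely, the convergents $p_{m}/q_m$ for $n_i\le m\le n_i+(\lambda_i-1)r_i$ are exceptionally good for $\alpha$ relative to $\beta_i$'s conjugate structure, which is the standard Baker approach.

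For part (a): with $r_i<Cn_i$ one has $\log|q_{n_i+r_i}|\le \log|q_{n_i}|\cdot O(1)$ in terms of degrees, since $\deg q_{n}=\sum \deg a_j$ and the block repeats. The quality of the approximation grows like $\lambda_i$ repetitions, so the exponent $\varepsilon_i$ satisfies $1+\varepsilon_i \gg \lambda_i r_i/(n_i+r_i)$ in degree terms. More importantly, the number of exceptional approximations along $i$ grows. The Davenport--Roth-type bound says that successive exceptional denominators satisfy $\log\log|q|$ of the $k$-th one $\lesssim k/\sqrt{\log}\dots$, i.e. the number of exceptional $q$ with $|q|\le X$ is at least $\gg\log\log X\cdot(\dots)$ only if $\log\lambda_i\sqrt{\log n_i}/n_i$ stays bounded. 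So the limsup hypothesis produces a contradiction with $\deg\alpha\ge3$. Hence $\alpha$ is quadratic or transcendental (rational is excluded since the expansion is infinite).

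For part (b): the bounded $|q_i|^{1/i}$ gives $\deg q_m\le cm$. Also $\deg q_{n_i+\lambda_i r_i}\ge n_i+\lambda_i r_i$, since every $\deg a_j\ge1$. Then $|\alpha-\beta_i|\le H(\beta_i)^{-2-\varepsilon_i}$ with $\varepsilon_i\to\infty$ along the subsequence where $\lambda_i/n_i\to\infty$, because $H(\beta_i)\le \mu^{O(c(n_i+r_i))}$. An approximation by quadratics with exponent tending to infinity contradicts the function field Roth--Wirsing-type bound for quadratic approximation, which I would deduce from quantitative Uchiyama by passing to the convergents $p_{m}/q_m$, $m\in[n_i, n_i+(\lambda_i-1)r_i]$. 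That these convergents are all exceptional is the main technical obstacle. I would handle it by comparing with $\beta_i$, whose convergents coincide, and using that $\beta_i$ has bounded partial quotient degrees in the period.
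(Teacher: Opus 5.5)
Your proposal has a genuine gap. In both parts it rests on producing exceptional rational approximations from the quasi-periodicity, either the convergents $p_m/q_m$ with $n_i\le m\le n_i+(\lambda_i-1)r_i$ or rationals obtained by a ``factorization trick'', satisfying $|\alpha-p/q|<|q|^{-2-\varepsilon}$. This is false. For a convergent, $|\alpha-p_m/q_m|=|q_m|^{-2}|a_{m+1}|^{-1}$, so it is exceptional exactly when $\deg a_{m+1}>\varepsilon\deg q_m$, and repeating a block says nothing about this. Moreover, in $K((T^{-1}))$ every $p/q$ with $|\alpha-p/q|<|q|^{-2}$ is already a convergent, so there is nothing else to find. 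In Example~\ref{ex: quasiperiodic} all partial quotients have degree $1$, so no $p/q$ with $\deg q\ge 1/\varepsilon$ satisfies $|\alpha-p/q|<|q|^{-2-\varepsilon}$, yet part b) applies to that $\alpha$. A bound on rational approximations cannot detect that $\alpha$ is close to a quadratic $\beta_i$.

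What is missing is a Liouville-type lower bound for approximation by quadratic elements. You were one step from it when you noted that $|Q_i(\alpha)|$ is small. Write $Q_i=uX^2+vX+w$. Since $\deg\alpha=d\ge 3$, the quantity $c_d^2\prod_k Q_i(\alpha_k)$ is a nonzero element of $K[T]$: it is the resultant of $Q_i$ with the minimal polynomial of $\alpha$, where $c_d$ is that polynomial's leading coefficient and the $\alpha_k$ are the conjugates of $\alpha$. Hence $|Q_i(\alpha)|\ge c(\alpha)H(Q_i)^{-(d-1)}$. Combined with $Q_i(\alpha)=(\alpha-\beta_i)\bigl(u(\alpha+\beta_i)+v\bigr)$, this gives $|\alpha-\beta_i|\ge c'(\alpha)H(Q_i)^{-d}$. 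The paper uses the variant Lemma~\ref{Lem: fgHeiHbaib}, applied to the complete quotient $\alpha_{n_i}$ (whose height is controlled by Lemma~\ref{Lem: HeiHbaib}) and the reduced quadratic $[\overline{B_{n_i}}]$.

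With this bound, part a) goes as follows. Comparing it with $|\alpha-\beta_i|\le|q_{n_i+\lambda_ir_i-1}|^{-2}$ and $H(Q_i)\ll|q_{n_i-1}q_{n_i+r_i-1}|$ gives $\lambda_ir_i\ll\deg q_{n_i+r_i-1}$. The quantitative Uchiyama theorem enters only through Theorem~\ref{Thm: DRfunfields}, $\log\deg q_n\ll n/\sqrt{\log n}$. Its proof counts the indices $j$ with $\deg q_{j+1}>(1+\varepsilon)\deg q_j$, which is the only place exceptional convergents occur. Applied at $n=n_i+r_i-1$ with $r_i<Cn_i$, it yields $\log\lambda_i\ll n_i/\sqrt{\log n_i}$, contradicting the $\limsup$ hypothesis. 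Your claim $\deg q_{n_i+r_i}=O(\deg q_{n_i})$ is false in a), since the degrees inside the block are unrestricted. It is also not needed, because the Davenport--Roth bound is in terms of the index.

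In part b) your reduction is sound. The bounds $\log H(\beta_i)\le(2n_i+r_i)\log M$ and $\deg q_{n_i+\lambda_ir_i-1}\ge n_i+\lambda_ir_i-1$ give quadratic approximations whose exponent tends to infinity along a subsequence. The Liouville bound above then finishes the proof at once, with no appeal to Uchiyama or to the convergents. This is shorter than the paper's route, which applies Theorem~\ref{Thm: Ratliff} to $(u_i,v_i,w_i)$ to get a linear relation, shows that $u_i/v_i$ tends to an algebraic $\psi$ with $|u_i/v_i-\psi|<\tilde C|v_i|^{-d-1}$, and concludes with Theorem~\ref{Thm: Mahler}.
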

As a consequence of this result, the continued fraction of $\alpha$ in Example~\ref{ex: quasiperiodic} is transcendental. Indeed, using that notation we have \(|q_i|^{1/i} = \mu\)
for all $i \ge 1$ and
\[\lim_{i \to + \infty }\frac{\lambda_i}{n_i} = \lim_{i \to + \infty }n_i + \frac{1}{n_i} = + \infty,\]
and so the hypotheses of Theorem~\ref{Thm: mainquasiperiodic}~b) are satisfied.

We also prove the following additional main result, which is new in the setting of function fields. It is a transcendence criterion for palindromic continued fractions, which represents a well-known class of transcendental continued fractions in the real setting \cite{AB07b}.

\begin{Theorem}\label{Thm: pal}
Consider $\alpha \in K((T^{-1}))$, where $K$ is a field of characteristic zero. If $\alpha = [0, a_1, a_2, \ldots]$, where $(a_i)_{i \geq 1}$ is a sequence beginning with arbitrarily long palindromes, then $\alpha$ is either transcendental or quadratic.
\end{Theorem}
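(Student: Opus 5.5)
We follow the strategy of Adamczewski and Bugeaud \cite{AB07b} for the real case. The only non-trivial input is a Diophantine statement in $K((T^{-1}))$, namely the function field analogue of the Schmidt subspace theorem, or equivalently a Roth-type result for simultaneous approximation. In characteristic zero this is available: we may use the Mason--Stothers ($abc$) theorem for polynomials, or the quantitative form of Uchiyama's theorem announced in the abstract.

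Let $p_n/q_n$ be the convergents of $\alpha$, and suppose $\alpha$ is algebraic and not quadratic. The argument then proceeds in four steps.

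\textbf{Step 1: the matrix identity.} By hypothesis there are infinitely many $n$ with $a_1\cdots a_n$ a palindrome. Write
\[
M_n=\begin{pmatrix} q_n & q_{n-1}\\ p_n & p_{n-1}\end{pmatrix}=\prod_{i=1}^n\begin{pmatrix} a_i&1\\1&0\end{pmatrix}.
\]
Each factor is symmetric, so the palindromic property gives $M_n=M_n^{T}$. Hence $q_{n-1}=p_n$.

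\textbf{Step 2: three small forms.} Since $|q_n\alpha-p_n|<|q_n|^{-1}$, we have
\[
|q_n\alpha-p_n|<|q_n|^{-1},\qquad |q_{n-1}\alpha-p_{n-1}|<|q_n|^{-1}.
\]
Replacing $p_n$ by $q_{n-1}$ in the first, and then $q_{n-1}$ by roughly $q_n\alpha$,
\[
|q_n\alpha^2-p_{n-1}|\le|\alpha|\,|q_n\alpha-q_{n-1}|+|\alpha q_{n-1}-p_{n-1}|\ll|q_n|^{-1}.
\]
So the point $\underline{x}_n=(q_n,q_{n-1},p_{n-1})\in K[T]^3$, whose height is about $|q_n|$, makes the three independent linear forms
\[
X_1,\qquad \alpha X_1-X_2,\qquad \alpha^2X_1-X_3
\]
have product of absolute values $\ll|q_n|\,|q_n|^{-2}=|q_n|^{-1}$. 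Here we use $|\alpha|<1$, since $a_0=0$ and $|a_i|\ge\mu$.

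\textbf{Step 3: subspace theorem.} The forms have algebraic coefficients, and the point $\underline{x}_n$ satisfies
\[
\prod_{j=1}^{3}|L_j(\underline{x}_n)|\le H(\underline{x}_n)^{-\varepsilon}
\]
for some $\varepsilon>0$. The function field subspace theorem in characteristic zero (Wang, or via Mason's theorem) then says that all such $\underline{x}_n$ lie in finitely many proper subspaces. Consequently there is a fixed nonzero triple $(x,y,z)$ with
\[
xq_n+yq_{n-1}+zp_{n-1}=0
\]
for infinitely many $n$.

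\textbf{Step 4: conclusion.} Divide the relation by $q_n$ and let $n\to\infty$ along the subsequence. Since $q_{n-1}/q_n\to\alpha$ and $p_{n-1}/q_n\to\alpha^2$, we get
\[
x+y\alpha+z\alpha^2=0.
\]
Therefore $\alpha$ is rational or quadratic. Rational is impossible because the expansion is infinite, which contradicts the assumption that $\alpha$ is algebraic and not quadratic.

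\textbf{Main obstacle.} The key step is Step 3. We need a reference or proof of the subspace theorem in $K((T^{-1}))$ for characteristic zero, with the product of the forms bounded by $H^{-\varepsilon}$. We have $H^{-1}$, so there is ample room.

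If the paper prefers to avoid the full subspace theorem, the alternative is to exploit the exact symmetry further. Consider
\[
(q_n\alpha-q_{n-1})(q_n\alpha-q_{n-1})^{\sigma}
\]
over conjugates; for an algebraic $\alpha$ of degree $d\ge3$, Liouville-type estimates fail to give a contradiction. So we would indeed rely on the subspace theorem, or on the quantitative Uchiyama theorem applied to the quadratic form
\[
q_n\alpha^2-2q_{n-1}\alpha+p_{n-1}.
\]
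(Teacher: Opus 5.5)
Your argument is correct and is essentially the paper's proof. The paper applies Ratliff's subspace theorem (Theorem~\ref{Thm: Ratliff}, which resolves your ``main obstacle'') to the same points $(p_{n-1},q_n,q_{n-1})$. Its forms $Z\alpha-X$, $Z\alpha^{-1}-Y$, $Z$ are an invertible linear recombination of yours after permuting coordinates, and they give the same product bound $|q_{n+1}|^{-1}$. The paper then obtains the relation $u\alpha+v+w\alpha^{-1}=0$ where you obtain $x+y\alpha+z\alpha^2=0$.

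Two cosmetic points. First, $|q_{n-1}\alpha-p_{n-1}|$ equals $|q_n|^{-1}$ rather than being strictly smaller, which is harmless for your estimate. Second, your closing suggestion to replace the subspace theorem by Uchiyama's theorem applied to $q_n\alpha^2-2q_{n-1}\alpha+p_{n-1}$ would not work as stated, since that is a three-variable linear form rather than a rational approximation to a single algebraic element; but your proof does not need it.
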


These results rely on the fact that, in characteristic zero, analogues of Roth’s theorem and of the Subspace Theorem are available for power series fields (see, for instance, \cite{Las00}). However, in order to obtain our results on quasi-periodic continued fractions, a quantitative version of Roth’s theorem over power series fields is required. To the best of our knowledge, no such quantitative version is currently available in a form suitable for our purposes. 
Therefore, another of our results is summarized in the following theorem.

\begin{Theorem}\label{Thm: quantUchi}
Let $K$ be a field of characteristic zero. Let $\alpha \in K((T^{-1}))$ be algebraic of degree $d\geq 2$. Then, for every $0<\varepsilon \le \frac{1}{3}$, the number of solutions of 
\begin{equation}\label{eq: inequality}
 \left | \alpha - \frac{p}{q} \right | < \frac{1}{|q|^{2 + \varepsilon}}
\end{equation}
with $p,q\in K[T]$, $\gcd(p,q)=1$, is less than $\exp(Cd^2\varepsilon^{-2})$ for a positive constant $C$ depending only on $\alpha$.
\end{Theorem}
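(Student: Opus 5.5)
I will follow the classical Davenport--Roth scheme, adapted to $K((T^{-1}))$. There it is simpler, because heights are degrees and Siegel's lemma becomes linear algebra over $K$. We may enlarge $K$ to contain the coefficients of the minimal polynomial $P$ of $\alpha$ over $K(T)$, clearing denominators so that $P\in K[T][X]$. All constructions are then finite-dimensional $K$-linear algebra, with the $T$-degree playing the role of the logarithmic height.

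The argument has two ingredients: a gap principle and a Roth-type obstruction for well-spaced solutions.

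\textbf{Gap principle.} Let $p/q\neq p'/q'$ be two solutions of \eqref{eq: inequality} with $|q|\le|q'|$. Since $|pq'-p'q|\ge 1$, we get
\[
\frac{1}{|qq'|}\le \max\left(|q|^{-2-\varepsilon},|q'|^{-2-\varepsilon}\right)=|q|^{-2-\varepsilon},
\]
so $\deg q'\ge (1+\varepsilon)\deg q$. Two further facts control the counting:
\begin{enumerate}
\item Solutions of bounded degree $\deg q\le h_0$ are few. For each degree, the ultrametric inequality and the uniqueness of good approximations force at most one solution; this is the function-field analogue of the fact that $|\alpha-p/q|<|q|^{-2}$ implies that $p/q$ is a convergent. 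Hence there are at most $h_0+1$ such solutions.
\item Degrees in a chain $h_1<h_2<\dots$ of large solutions grow geometrically, with ratio at least $1+\varepsilon$.
\end{enumerate}
Because the degree is a non-negative integer, small solutions are bounded directly by $h_0(\alpha,\varepsilon)$, chosen polynomial in $d/\varepsilon$ times a constant depending on $\alpha$.

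\textbf{Roth step for large solutions.} I would prove that there cannot be $m$ solutions $p_j/q_j$ with $\deg q_1\ge h_0$ and
\[
\deg q_{j+1}\ge \varepsilon^{-1}\,\deg q_j\,\cdot(\text{const}),
\qquad m\approx C'\log d/\varepsilon^{2}.
\]
The construction runs as follows.
\begin{enumerate}
\item Choose integers $r_j\approx r_1\deg q_1/\deg q_j$.
\item Use linear algebra over $K$ to build a nonzero polynomial $R\in K[T][X_1,\dots,X_m]$. It has partial degrees $\le r_j$, small $T$-degree, and vanishes to index $\ge m(1/2-\eta)/\dots$ at $(\alpha,\dots,\alpha)$ with respect to $(r_1,\dots,r_m)$. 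Counting unknowns against conditions is exact here, with no archimedean losses, so the height bound is of the form $\deg_T R\le \eta' r_1\deg q_1$.
\item Apply Taylor expansion at $\alpha$ with the ultrametric norm. This gives that some derivative of small index, evaluated at $(p_1/q_1,\dots,p_m/q_m)$ and multiplied by $\prod q_j^{r_j}$, is a polynomial in $K[T]$ of absolute value $<1$. It must therefore vanish.
\item Contradict this vanishing with Roth's lemma, i.e.\ a nonvanishing index bound at rational points. Over function fields in characteristic zero the Wronskian-based proof carries over verbatim, since $T$-degrees replace heights.
\end{enumerate}
With $\eta\sim\varepsilon$ and the usual choices, the contradiction holds for $m\ge C\log(d)\varepsilon^{-2}$. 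Since the statement allows $\exp(Cd^2\varepsilon^{-2})$, I can afford cruder choices, e.g.\ $m\sim d^2\varepsilon^{-2}$, with the spacing ratio between consecutive solutions of the chain taken as large as $\exp(\text{poly}(m))$ rather than optimized.

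\textbf{Counting.} Among large solutions ordered by degree, the gap principle shows that within any interval $[H,H^{\omega}]$ of degrees, where $\omega$ is the required spacing ratio, there are at most about $\log\omega/\log(1+\varepsilon)\le 2\varepsilon^{-1}\log\omega$ solutions. If the total number exceeded $m\cdot 2\varepsilon^{-1}\log\omega$, I could extract a chain of $m$ solutions with consecutive degree ratios at least $\omega$, which the Roth step forbids. Adding the $h_0+1$ small solutions and substituting $\omega=\exp(\text{poly}(d/\varepsilon))$ gives a total of at most $\exp(Cd^2\varepsilon^{-2})$. The condition $\varepsilon\le1/3$ only serves to absorb constants.

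The main obstacle is the Roth step: obtaining the explicit index and nonvanishing estimates, namely the function-field Roth's lemma with explicit dependence on $m$, $d$ and $\varepsilon$. That is where the $d^2\varepsilon^{-2}$ exponent comes from. I would first try a clean characteristic-zero Wronskian induction, tracking $T$-degrees of the generalized Wronskians. The available slack in the target bound should let crude constants suffice.
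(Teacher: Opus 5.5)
Your architecture (gap principle, a Roth-type obstruction to $m$ well-spaced large solutions, then counting) matches the paper's, but the proposal does not prove the statement. The Roth step carries all the content and you only outline it, and the parameter sizes you expect from it are wrong in a way that breaks your count. The paper does not reprove this step. It imports it in the explicit form Uchiyama's proof (his Section~7) already gives: for $\alpha$ integral over $K[T]$ and $m,\delta$ with $\eta=7^m\delta^{(1/2)^m}$ satisfying \eqref{eq: U0}, \eqref{eq: U3}, \eqref{eq: U1}, there are no $m$ coprime solutions with $\log|t_1|>(1+m\log H(\alpha))\delta^{-2}$ and $\log|t_j|/\log|t_{j-1}|>2/\delta$. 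The paper then takes $m=\lfloor 100d^2\varepsilon^{-2}\rfloor+1$ and $\delta=7^{-m2^m}$ (so $\eta=1$) and checks those inequalities. The exponent comes from $m\asymp d^2\varepsilon^{-2}$, forced by $2m/(m-d\sqrt{2m})<2+\varepsilon$, together with $\log\delta^{-1}\asymp m2^m$. If you do not cite this, you must actually produce an explicit Roth lemma over $K[T]$; saying the Wronskian argument ``carries over verbatim'' does not supply the constants the count depends on.

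Here is where the count breaks. A Wronskian-type Roth lemma ties $\eta$ to $\delta$ by $\eta=7^m\delta^{(1/2)^m}$, so $\delta^{-1}$ is doubly exponential in $m$. Both the spacing ($2/\delta$, not $\varepsilon^{-1}\cdot\mathrm{const}$) and the threshold on $\deg q_1$ (of order $(1+m\log H(\alpha))\delta^{-2}$) are powers of $\delta^{-1}$. So your $h_0$ is not polynomial in $d/\varepsilon$, and ``at most $h_0+1$ small solutions, one per degree'' gives only a doubly exponential bound. The large spacing is harmless, since only $\log(2/\delta)\asymp m2^m$ enters the count. The fix for the threshold is to count the small solutions with the geometric growth $\deg q'>(1+\varepsilon)\deg q$ as well: below a threshold $X$ there are $O(\varepsilon^{-1}\log X)=O(m2^m/\varepsilon)$ of them. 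This is the paper's $k$, and with $(m-1)\ell$, where $\ell=O(m2^m/\varepsilon)$, the total is $O(m^22^m/\varepsilon)\le\exp(Cd^2\varepsilon^{-2})$. Finally, the explicit statement is for integral $\alpha$, so the paper needs a reduction your ``clear denominators'' remark skips. It passes to $\beta=e_d\alpha$ and rewrites the inequality as $|\beta-ap/q|<|a|/|q|^{2+\varepsilon}$ with $a=e_d$. It then splits by $b=\gcd(a,q)$, so that $\tilde a s/\tilde t$ is in lowest terms and eventually has exponent $2+\varepsilon/2$. This costs a factor of at most $2^{\deg a}$ for the choices of $b$.
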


\begin{remark}
We consider solutions of \eqref{eq: inequality} modulo simultaneous multiplication by elements of $K^{\times}$.
Indeed, if $K$ is infinite and $(p,q)$ is a solution, then so are $(cp,cq)$ for all $c \in K^{\times}$.
\end{remark}

As a consequence, we also obtain an analogue of a result of Davenport and Roth \cite{DR55}. More precisely, let $\alpha \in K((T^{-1}))$ be algebraic and $q_i$ be the denominator of the $i$-th convergent in its continued fraction expansion.
Then,
\[\log \log |q_i| < \frac{{C}i}{\sqrt{\log i}},\]
where $C$ is a positive constant depending only on $\alpha$.

The paper is structured as follows.
In Section~\ref{subsec: diophappr}, we review the fundamental results on Diophantine approximation over function fields, both in positive characteristic and in characteristic zero. These results will be used throughout the proofs of the main theorems. In Section~\ref{subsec: continued}, we recall the definition and the main properties of continued fractions over function fields, again considering both the positive characteristic and the characteristic zero settings.
In Section~\ref{sec: quantit}, we prove Theorem~\ref{Thm: quantUchi}, which provides a quantitative version of Uchiyama's theorem by establishing an explicit upper bound for the number of solutions to \eqref{eq: inequality}. 
Section~\ref{sec: proofs} contains the proofs of the transcendence results for the two classes of quasi-periodic continued fractions introduced above, and for palindromic continued fractions. More precisely, in Section~\ref{subsec: quasiperiod} we prove Theorem~\ref{Thm: mainquasiperiodic} and in Section~\ref{subsec: pali} we prove Theorem~\ref{Thm: pal}. 
Finally, in Section~\ref{sec: spunti}, we discuss promising directions for future research that may lead to improvements in our results.

\section{Preliminaries}\label{sec: prelim}
Given a field $K$, let $K[T]$ be the ring of polynomials with coefficients in $K$, $K(T)$ its field of fractions and $K((T^{-1}))$ the completion of $K(T)$ with respect to the ultrametric norm
\[ \left| \cfrac{a}{b} \right| = \mu^{\deg a - \deg b}, \quad |0|=0 \]
for all non-zero $a/b \in K(T)$ and $\mu$ a fixed real number greater than 1.

\subsection{Diophantine approximation}\label{subsec: diophappr}
In this section, we recall the fundamental results on Diophantine approximation over fields of power series. For further details, see, e.g., \cite{Las00}.
We begin by recalling an analogue of Liouville's theorem, proved by Mahler \cite{Mahler}, which holds for a field $K$ of arbitrary characteristic.

\begin{Theorem}[Mahler \cite{Mahler}]\label{Thm: Mahler}
Let $\alpha \in K((T^{-1}))$ be algebraic of degree $d \ge 2$. There exists a positive real constant $C$, depending only on $\alpha$, such that
\[ \left|\alpha - \cfrac{p}{q} \right| \geq \cfrac{C}{|q|^d} \]
for all $p, q \in K[T]$ with $q \not= 0$.
\end{Theorem}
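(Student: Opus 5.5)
We follow the classical argument attributed to Liouville, adapted to $K((T^{-1}))$. Let $P(X) = c_d X^d + \dots + c_0 \in K[T][X]$ be the minimal polynomial of $\alpha$ over $K(T)$. We clear denominators so that the coefficients lie in $K[T]$, and we take $P$ irreducible of degree $d \ge 2$. Given $p,q \in K[T]$ with $q \neq 0$, the element $q^d P(p/q) = \sum_i c_i p^i q^{d-i}$ lies in $K[T]$. It is non-zero: $P$ is irreducible over $K(T)$ of degree $d\ge 2$, so it has no root in $K(T)$. Since every non-zero polynomial has norm at least $1$, we obtain
\[ |P(p/q)| \ge |q|^{-d}. \]

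Next we bound $|P(p/q)|$ from above in terms of $|\alpha - p/q|$. We split into two cases.

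\emph{Case $|\alpha - p/q| \ge 1$.} The inequality is trivial with $C \le 1$, since $|q|^d \ge 1$ gives $C|q|^{-d} \le 1 \le |\alpha - p/q|$.

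\emph{Case $|\alpha - p/q| < 1$.} Since $P(\alpha)=0$, we write
\[ P(p/q) = P(p/q) - P(\alpha) = (p/q - \alpha)\, Q(p/q, \alpha), \]
where $Q(x,y) = \sum_i c_i \sum_{j=0}^{i-1} x^j y^{i-1-j}$. We work in the completion $K((T^{-1}))$, which contains $\alpha$. By the ultrametric inequality, $|p/q| \le \max(|\alpha|, 1)$. Applying the ultrametric inequality again gives
\[ |Q(p/q,\alpha)| \le M := \max_i |c_i| \cdot \max(|\alpha|,1)^{d-1}, \]
a constant depending only on $\alpha$. Hence $|q|^{-d} \le |P(p/q)| \le M\,|\alpha - p/q|$, that is, $|\alpha - p/q| \ge M^{-1}|q|^{-d}$.

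Taking $C = \min(1, M^{-1})$ covers both cases. The only point needing care is the non-vanishing of $P(p/q)$, which follows from irreducibility and $d \ge 2$. Everything else is the ultrametric inequality together with the integrality fact that non-zero polynomials have norm at least $1$. No characteristic assumption is used, consistent with the statement holding for arbitrary $K$.
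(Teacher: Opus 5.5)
Your proof is correct and complete. It is the classical Liouville argument, which is also how Mahler proves it: $q^dP(p/q)$ is a nonzero element of $K[T]$, so $|q^dP(p/q)|\ge 1$, and this is combined with the factorization $P(p/q)=(p/q-\alpha)\,Q(p/q,\alpha)$ and the ultrametric bound on $Q$. The paper gives no proof of this statement and only cites Mahler, so there is no in-paper argument to compare against.
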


There is also an analogue of Roth's theorem proved by Uchiyama \cite{Uchi} but only for fields $K$ of characteristic zero.

\begin{Theorem}[Uchiyama \cite{Uchi}]\label{Thm: Uchiyama}
Let $K$ be a field of characteristic zero, and let $\alpha \in K((T^{-1}))$ be algebraic over $K(T)$. Then, for each $\varepsilon > 0$, the inequality
\[ \left| \alpha - \cfrac{p}{q} \right| < \cfrac{1}{|q|^{2 + \varepsilon}}\]
has only finitely many solutions $(p,q)\in K[T]^2$ with $q \not= 0$ and $\gcd(p, q) = 1$.
\end{Theorem}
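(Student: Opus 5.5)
The plan is to transplant Roth's proof to $K(T)$, with the height of a polynomial over $K[T]$ replaced by the maximum degree in $T$ of its coefficients. Three remarks make this natural.
\begin{itemize}
\item Siegel's lemma over $K[T]$ becomes a statement of exact linear algebra over $K$, with no archimedean error terms.
\item The Liouville-type lower bound is trivial: a nonzero element $a/b\in K(T)$ satisfies $|a/b|\ge |b|^{-1}$.
\item Characteristic zero is needed only in the non-vanishing step.
\end{itemize}

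\textbf{Setup.} Clearing denominators in the minimal polynomial, we may assume $\alpha$ is integral over $K[T]$. Let $P\in K[T][X]$ be its minimal polynomial, of degree $d\ge 2$. Suppose, for contradiction, that the inequality has infinitely many coprime solutions for some $\varepsilon>0$.

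Fix a large integer $m$ and a small $\delta>0$. Choose solutions $p_1/q_1,\dots,p_m/q_m$ such that:
\begin{itemize}
\item $\deg q_1$ is huge in terms of $m,\delta,\alpha$;
\item $\deg q_{h+1}/\deg q_h$ is huge for each $h$.
\end{itemize}
Then pick integers $r_1$ large and $r_h$ with $r_h\deg q_h\approx r_1\deg q_1$.

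\textbf{Auxiliary polynomial.} Using the dimension count over $K$ (Siegel's lemma in its function-field form), construct a nonzero $A\in K[T][X_1,\dots,X_m]$ with the following properties:
\begin{itemize}
\item $\deg_{X_h}A\le r_h$ for each $h$;
\item its $T$-degree is at most $c(\alpha)\,m\,r_1$;
\item its index at $(\alpha,\dots,\alpha)$, with weights $r_h$, is at least $\tfrac m2(1-\delta)$.
\end{itemize}
The vanishing conditions are linear over $K(T)$ in the coefficients. We reduce $\alpha^j$ modulo $P$ and separate the powers $\alpha^0,\dots,\alpha^{d-1}$ and the $T$-coefficients. This gives a linear system over $K$ with fewer equations than unknowns, by the usual volume computation on $\{\sum i_h/r_h\le m/2(1-\delta)\}$.

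\textbf{Upper bound at the rational point.} Take a divided derivative $A_{\mathbf j}$ of small index $\sum j_h/r_h$. Expand it by Taylor around $(\alpha,\dots,\alpha)$ and use $|\alpha-p_h/q_h|<|q_h|^{-2-\varepsilon}$ together with ultrametricity. This shows that $A_{\mathbf j}(p_1/q_1,\dots,p_m/q_m)$ has absolute value at most about $\mu^{c m r_1}|q_1|^{-r_1 (m/2)(2+\varepsilon)(1-O(\delta))}$.

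\textbf{Lower bound if nonzero.} On the other hand, $A_{\mathbf j}(p/q)\prod q_h^{r_h}$ lies in $K[T]$, and $\prod_h |q_h|^{r_h}\approx |q_1|^{m r_1}$. So if $A_{\mathbf j}(p/q)$ is nonzero, its absolute value is at least $|q_1|^{-m r_1}$. Since $\deg q_1$ is huge compared with the $\mu^{cmr_1}$ factor, the two bounds force every such derivative to vanish. Hence the index of $A$ at $(p_1/q_1,\dots,p_m/q_m)$ is at least about $\varepsilon m/4$.

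\textbf{Main obstacle: Roth's lemma.} The remaining step is to show that this index is in fact small, namely at most some $\eta(m,\delta)\to 0$. This requires an analogue of Roth's lemma for polynomials over $K[T]$ evaluated at rational points of $K(T)$ with rapidly increasing denominators.

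I would follow Roth's induction on $m$. Take a nonvanishing generalized Wronskian of $A$, factored as a product of polynomials in $X_1$ and in $(X_2,\dots,X_m)$. Apply Gauss's lemma in $K[T][X]$: the degree in $T$ is additive under products of primitive polynomials, which is the exact analogue of Gelfond's inequality and is even cleaner here. Then use the one-variable fact that $(q X-p)^k$ divides $B$ forces $k\deg q\le \deg_T B+\ldots$.

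The nonvanishing of a suitable Wronskian for linearly independent polynomials requires $\operatorname{char}K=0$. This is precisely the point that fails in positive characteristic, where $\alpha^{p}$-type examples break Roth's theorem. I would verify carefully that each step of the Wronskian argument uses only derivations with respect to the $X_h$ and the characteristic-zero hypothesis.

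With Roth's lemma in hand, choosing $\delta$ small and $m$ large gives $\varepsilon m/4>\eta$, the desired contradiction.
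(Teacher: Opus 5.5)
Your route is the one the paper relies on. The paper does not reprove this theorem; it cites Uchiyama, whose proof is Roth's method run over $K(T)$ along your lines: exact Siegel lemma over $K$, Taylor upper bound, integrality lower bound, and Roth's lemma via Wronskians and Gauss's lemma, with characteristic zero entering only through the Wronskian. The constraints \eqref{eq: U0}--\eqref{eq: U5} recorded in Section~\ref{sec: quantit} are the parameter bookkeeping of that proof.

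As written, however, your parameter choice cannot be made, because one $\delta$ plays two incompatible roles. As the Siegel margin (index $\frac m2(1-\delta)$ at $(\alpha,\dots,\alpha)$), it must leave fewer than $\frac1d\prod_h(r_h+1)$ multi-indices with $\sum_h i_h/r_h\le\frac m2(1-\delta)$. For $d\ge3$ this forces $m\ge c_d\,\delta^{-2}$. As the parameter of Roth's lemma, the bound $\eta(m,\delta)$ from Roth's induction (in Uchiyama's version $\eta=7^m\delta^{(1/2)^m}$) drops below $\varepsilon m/4$ only when $\delta$ is doubly exponentially small in $m$. No choice of ``$\delta$ small and $m$ large'' satisfies both requirements. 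You need separate parameters, chosen in this order:
\begin{enumerate}
\item a margin depending only on $m$ and $d$ (Uchiyama's is governed by $d\sqrt{2m}$, see \eqref{eq: U0});
\item $m$ large in terms of $d$ and $\varepsilon$;
\item $\delta$ tiny in terms of $m$ (in the proof of Theorem~\ref{thm: DR} the paper takes $\delta=7^{-m2^m}$, so that $\eta=1$);
\item only then $\deg q_1$ and the degree ratios, as in \eqref{eq: U4} and \eqref{eq: U5}.
\end{enumerate}

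Two smaller repairs are also needed.

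First, in the Wronskian step you must split off the variable of \emph{smallest} degree. Write $A=\sum_{\nu\le k}\phi_\nu(X_1,\dots,X_{m-1})\psi_\nu(X_m)$ with $k\le r_m+1$. The differential operators in $X_1,\dots,X_{m-1}$ then have order at most $r_m$ and cost at most $r_m/r_{m-1}\le\delta$ in index. If you split off $X_1$, which has the largest degree in your normalization, you allow up to $r_1+1$ terms. That forces derivatives of order about $r_1$ in variables of degree at most $\delta r_1$, which wrecks the index estimate.

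Second, because $K$ may be infinite, solutions have to be counted modulo $K^\times$, as in the remark following Theorem~\ref{Thm: quantUchi}. Moreover, infinitely many solutions do not automatically include ones with $\deg q$ arbitrarily large, since there are infinitely many $q$ of each degree. You need the gap principle, Proposition~\ref{prop: ineqAPQ} with $h=1$, which says distinct solutions have distinct $\deg q$; only then can you select $q_1,\dots,q_m$ as you describe.

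With these corrections your outline is a faithful sketch of Uchiyama's argument.
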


Finally, Ratliff \cite{Rat} obtained a version of the Subspace Theorem for function fields.

\begin{Theorem}[Ratliff \cite{Rat}]\label{Thm: Ratliff}
Let $K$ be a field of characteristic zero and let $L_1,\ldots,L_n$ be linear forms in $n$ variables with coefficients in $K((T^{-1}))$ that are algebraic over $K(T)$. Suppose that $L_1,\ldots,L_n$ are linearly independent over $K((T^{-1}))$. Then, for all $\varepsilon>0$ there exists a constant $C\in\mathbb R$ and finitely many proper linear subspaces of $K(T)^n$, such that all solutions $(x_1,\ldots,x_n)\in K[T]^n$ of
\begin{equation*}
\prod\limits_{i=1}^n|L_i(x_1,\ldots,x_n)|<\frac{1}{\max\{|x_1|,\ldots,|x_n|\}^{\varepsilon}},
\end{equation*}
either satisfy $\max\{|x_1|,\ldots,|x_n|\}<C$ or they lie in one of the finite number of proper subspaces.
\end{Theorem}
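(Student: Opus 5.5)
The plan is to follow Schmidt's proof of the Subspace Theorem, transferred to $K[T]\subset K((T^{-1}))$. The non-archimedean setting makes the geometry-of-numbers part easier and exact. The only place where characteristic zero really enters is the non-vanishing (Roth's lemma / Wronskian) step.

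\emph{Setup.} First I normalise: I may assume $\det(L_1,\ldots,L_n)=1$ and that all coefficients lie in a finite extension $F$ of $K(T)$ embedded in $K((T^{-1}))$. For a solution $x$ with $H=\max|x_i|$ large, consider the parallelepiped $\Pi(x)=\{y: |L_i(y)|\le |L_i(x)|\}$. Over $K((T^{-1}))$, Mahler's theory of successive minima for $K[T]$-lattices is exact: $\lambda_1\cdots\lambda_n=\mu^{-\mathrm{vol}}$ with no constants, and there is a reduced basis realising the minima. Together with the hypothesis $\prod|L_i(x)|<H^{-\varepsilon}$, this shows that the successive minima of the convex body $\{y:|L_i(y)|\le H^{c_i}\}$, with suitable exponents $c_i$ summing to $-\varepsilon$, satisfy $\lambda_1\cdots\lambda_{n-1}\ll H^{-\delta}$. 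Passing to exterior powers (Mahler's compound-body theorem, again exact here) reduces matters to the case where the first minimum of an associated body is small. That is, we get a point of a lattice in a generalised "Roth" configuration. Using a finite covering of the simplex of exponent tuples $(c_i)$, it suffices to treat one fixed tuple.

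\emph{Auxiliary polynomial and index.} Next, suppose infinitely many such solutions lie outside any finite union of subspaces. I choose $m$ of them, $x^{(1)},\ldots,x^{(m)}$, with rapidly increasing heights (the gap principle: any solution set in a bounded height range lies in few subspaces). By Siegel's lemma over $K[T]$, which is Riemann--Roch on $\mathbb P^1$ and again essentially exact, I construct a multihomogeneous polynomial $P\in K[T][X^{(1)},\ldots,X^{(m)}]$ of multidegree $(r_1,\ldots,r_m)$ with small coefficients. It is required to have large index with respect to the subspaces $L_i=0$, weighted by the $c_i$. Taylor expansion at the points $x^{(h)}$, together with the ultrametric inequality, shows that every derivative of small order of $P$ at $(x^{(1)},\ldots,x^{(m)})$ has absolute value less than $1$. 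Since it lies in $K[T]$, it is therefore $0$. Hence $P$ has large index at this rational point.

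\emph{Non-vanishing (the main obstacle).} To get a contradiction I need an upper bound for the index of $P$ at a point of small height: a function-field Roth lemma or Faltings product theorem. I would try first the Wronskian route of Roth's lemma. Generalised Wronskians built from the derivations $\partial/\partial X$ are nonzero in characteristic zero for linearly independent polynomials. The height bounds go through because $|\cdot|$ is non-archimedean. Controlling the heights of the factored Wronskians over $K[T]$ needs care, since the height is just the degree. Adapting this lemma to points in general position in $\mathbb P^{n-1}$ (Schmidt's "index relative to a flag" version) is where I expect the real work to lie. The possible failure is exactly the degeneracy in which the points lie in a proper subspace, which is excluded by assumption. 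Combining the lower and upper index bounds for $m$ large in terms of $n$ and $\varepsilon$ gives the contradiction. It follows that all but finitely many subspace classes of solutions with $H\ge C$ are covered.
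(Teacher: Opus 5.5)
The paper does not prove this statement; it quotes it from Ratliff, whose argument adapts Schmidt's proof of the Subspace Theorem to $K(T)$. Your outline follows that method in spirit, but the non-vanishing step cannot be completed as you set it up. Your Taylor-expansion argument only shows that the low-order derivatives of $P$ vanish at the single point $(x^{(1)},\ldots,x^{(m)})\in(\mathbb{P}^{n-1})^m$. You then want a Roth-type lemma bounding the index of $P$ at a point of large height that does not lie in a proper subspace.

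For $n\ge 3$ no such lemma can hold. Take $x^{(h)}=(1,t_h,t_h^2)$ with $t_h\in K[T]$ of rapidly increasing degree; any three of these points are linearly independent (Vandermonde). Take $P=\prod_{h=1}^m\bigl(X_0^{(h)}X_2^{(h)}-(X_1^{(h)})^2\bigr)^{r_h/2}$. Then $P$ has constant coefficients, so height $0$, yet its index at $(x^{(1)},\ldots,x^{(m)})$ is $m/2$. A point of $\mathbb{P}^{n-1}$ has codimension $n-1$, so vanishing there gives no Gelfond-type lower bound on the height. That mechanism only works for $n=2$, where a point is a hyperplane.

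In Schmidt's proof, which Ratliff adapts, the rational objects fed into Roth's lemma are not the solutions. They are the hyperplanes $S_h=S(Q_h)$ spanned by the lattice points realising the first $n-1$ successive minima of the parametric bodies $\Pi(Q_h)$. One shows that $P$ and its low-order derivatives vanish at all lattice points of a suitable box in each $S_h$. Hence $P$ has high index along $S_1\times\cdots\times S_m$, i.e. with respect to the linear forms defining the $S_h$. For that notion of index, divisibility by powers of a linear form does force a large height (exactly so here, by Gauss's lemma), and the generalised Roth lemma applies to hyperplanes of rapidly increasing heights. This next-to-last-minimum argument is the missing idea in your plan.

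It also means your geometry-of-numbers step points the wrong way. The solution $x$ already makes the first minimum small, so passing to exterior powers to obtain a small first minimum accomplishes nothing. The compound bodies are used instead to reduce to the situation where $\lambda_{n-1}(Q)$ is much smaller than $\lambda_n(Q)$. Then $S(Q)$ is well defined and its height is controlled through $\lambda_1\cdots\lambda_{n-1}\asymp\lambda_n^{-1}$.

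Your normalisation is also off. With $\sum c_i=-\varepsilon$ one has $\lambda_1\cdots\lambda_n\asymp H^{\varepsilon}$, so $\lambda_1\cdots\lambda_{n-1}\ll H^{-\delta}$ does not follow; for instance $\lambda_1=\cdots=\lambda_{n-1}=1$, $\lambda_n=H^{\varepsilon}$ is consistent. You need $\sum c_i=0$, which gives $\lambda_1\le H^{-\varepsilon/n}$ and then $\lambda_1\cdots\lambda_{n-1}\ll H^{-\varepsilon/(n(n-1))}$.

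Finally, the gap principle you invoke (solutions in a bounded range of heights lie in boundedly many subspaces) is not a free input. It comes from the local constancy of $S(Q)$ in $Q$, so it too depends on this construction.
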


\subsection{Continued fractions}\label{subsec: continued}
In this section, we present the construction of continued fractions over function fields and recall the properties that will be exploited in the following. For more details see, e.g., \cite{Las20, Mala, Sch00}.
Given $\alpha = \sum\limits_{i \leq k} b_i T^i \in K((T^{-1}))$, with $k\in\mathbb Z$, $b_i \in K$ and $b_k \ne 0$, we define the following floor function:
\[ \begin{cases}  \lfloor \alpha \rfloor = \sum\limits_{i=0}^k b_i T^i \in K[T] \quad &\text{if $k \geq 0$}, \\ \lfloor \alpha \rfloor = 0 \quad &\text{otherwise}. \end{cases} \]
Then, the continued fraction expansion $\alpha_0 = [a_0, a_1, \ldots]$ is obtained by the usual recursive algorithm
\[ \begin{cases} a_i = \lfloor \alpha_i \rfloor \\ \alpha_{i+1} = \cfrac{1}{\alpha_i - a_i} \end{cases} \]
for $i = 0, 1, \ldots$.
We define the convergents of a continued fraction as 
\[\frac{p_i}{q_i} := [a_0, \ldots, a_i] \]
for all $i = 0, 1, \ldots$, where the sequences $(p_i)_{i \geq 0}$ and $(q_i)_{i \geq 0}$ satisfy the following recurrences:
\begin{equation*}
\begin{cases}
p_{-2}=0, \quad p_{-1}=1\\
p_{i}=a_{i} p_{i-1}+p_{i-2} 
\end{cases}
, \quad
\begin{cases}
q_{-2}=1, \quad q_{-1}=0 \\
q_{i}=a_{i} q_{i-1}+q_{i-2}
\end{cases}
\end{equation*}
for all $i \geq 0$.
Moreover, the classical identities for continued fractions hold:
\[ \alpha_0 = \cfrac{\alpha_{i+1} p_i + p_{i-1}}{\alpha_{i+1} q_i + q_{i-1}}, \quad p_{i} q_{i-1} - q_{i} p_{i-1} = (-1)^i \]
for all $i \geq 0$. We summarize in the following propositions some classical and useful properties.

\begin{Proposition}\label{prop: convprop}
Given $\alpha_0 = [a_0, a_1, \ldots]\in K((T^{-1}))$, let $(p_i)_{i \geq 0}$ and $(q_i)_{i \geq 0}$ be the sequences of numerators and denominators of convergents, respectively. The following properties hold
\begin{enumerate}
\item $|\alpha_i| = |a_i|$ for all $i \geq 1$.
\item $\gcd(p_i, q_i) = 1$ for all $i \geq 1$.
\item $|p_i| = |a_0a_1+1||a_2| \cdots |a_i|$ for all $i \geq 2$ and $|q_i| = |a_1|\cdots|a_i|$ for all $i \geq 1$.
\item $\left| \alpha - \cfrac{p_i}{q_i} \right| = \cfrac{1}{|q_{i+1}q_i|} < \cfrac{1}{|q_i|^2}$ for all $i \geq 0$.
\end{enumerate}
\end{Proposition}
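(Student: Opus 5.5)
The plan is to derive all four properties from two facts: the partial quotients $a_i$ with $i\ge 1$ have degree at least $1$, and the norm is ultrametric, so a sum has the norm of its dominant term whenever one term strictly dominates.

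\emph{Step 1: the partial quotients.} For every $i\ge 0$, the difference $\alpha_i-\lfloor\alpha_i\rfloor$ contains only negative powers of $T$. Hence $|\alpha_i-a_i|<1$, and therefore $|\alpha_{i+1}|>1$. A Laurent series of norm $>1$ has a leading term $b_kT^k$ with $k\ge 1$. This gives $\deg a_{i+1}=k\ge1$, so $|a_{i+1}|\ge\mu>1$. It also gives $|\alpha_{i+1}-a_{i+1}|<1<|a_{i+1}|$, so $|\alpha_{i+1}|=|a_{i+1}|$, which is a).

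\emph{Step 2: the classical identities.} The identities $\alpha_0=(\alpha_{i+1}p_i+p_{i-1})/(\alpha_{i+1}q_i+q_{i-1})$ and $p_iq_{i-1}-q_ip_{i-1}=(-1)^i$ are proved by the usual induction, exactly as over $\mathbb R$. The second one immediately gives b): any common divisor of $p_i$ and $q_i$ in $K[T]$ divides $\pm1$, so it is a unit.

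\emph{Step 3: the norms of $q_i$ and $p_i$.} For c), I show by induction that $|q_i|=|a_1|\cdots|a_i|$ and $|q_i|>|q_{i-1}|$ for $i\ge1$. The base cases are $q_0=1$ and $q_1=a_1$, with $|a_1|>1=|q_0|$. For the inductive step, $|a_iq_{i-1}|\ge\mu|q_{i-1}|>|q_{i-2}|$, so the ultrametric inequality gives $|q_i|=|a_i||q_{i-1}|$.

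For $p_i$ the same induction works starting from $p_2=a_2(a_0a_1+1)+a_0$. It only needs $|a_2p_1|>|p_0|$ with $p_1=a_0a_1+1$ and $p_0=a_0$. If $a_0=0$, then $p_0=0$ and there is nothing to check. Otherwise $|a_0a_1+1|=|a_0||a_1|>|a_0|$, because $|a_0a_1|\ge|a_1|>1$ dominates the constant term; hence $|a_2p_1|>|a_0|=|p_0|$. The general step then proceeds as for $q_i$: from $|p_{i-1}|\ge|p_{i-2}|$ we get $|a_ip_{i-1}|>|p_{i-2}|$.

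\emph{Step 4: the approximation formula.} For d), subtract $p_i/q_i$ from the identity for $\alpha_0$ and use the determinant relation:
\[
\alpha-\frac{p_i}{q_i}=\frac{\pm1}{q_i(\alpha_{i+1}q_i+q_{i-1})}.
\]
By a) we have $|\alpha_{i+1}q_i|=|a_{i+1}||q_i|>|q_{i-1}|$. Hence the denominator has norm $|q_i||a_{i+1}q_i|$, which equals $|q_i||q_{i+1}|$ by c). Finally $|q_{i+1}|>|q_i|$ gives the strict bound $<1/|q_i|^2$.

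None of these steps is a real obstacle. The only point requiring care is Step 1, namely that the strict inequality $|\alpha_{i+1}|>1$ forces $\deg a_{i+1}\ge1$. Every later ultrametric "strict dominance" argument rests on that fact, together with the base case for $p_i$ when $a_0\neq0$.
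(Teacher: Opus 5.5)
Your proof is correct and is the standard verification. The paper itself gives no proof of this proposition; it records these properties as classical facts with references to the surveys, so your argument supplies what the paper takes for granted.

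One cosmetic slip: with the stated initial values the determinant identity is $p_iq_{i-1}-q_ip_{i-1}=(-1)^{i+1}$, for example $p_0q_{-1}-q_0p_{-1}=-1$. It is not $(-1)^i$ as you copied from the paper. Since you only use that it equals $\pm1$, nothing in your argument is affected.
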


\begin{Proposition}\label{Pro: difference}
Let $\alpha=[a_0,a_1,\ldots]$ and
$\beta=[b_0,b_1,\ldots]$ be two continued fractions.
If $a_i=b_i$ for all $0\leq i\leq n$, then
\[
|\alpha-\beta|\leq \frac{1}{|q_n|^2}.
\]
\end{Proposition}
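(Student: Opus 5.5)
The statement is Proposition~\ref{Pro: difference}. We reduce it to the ultrametric inequality together with the identity
\[
\alpha = \frac{\alpha_{n+1}p_n+p_{n-1}}{\alpha_{n+1}q_n+q_{n-1}},
\]
and its analogue for $\beta$. Since $a_i=b_i$ for $0\le i\le n$, the two expansions share the same $p_{n-1},q_{n-1},p_n,q_n$. So
\[
\alpha=\frac{\alpha_{n+1}p_n+p_{n-1}}{\alpha_{n+1}q_n+q_{n-1}},\qquad \beta=\frac{\beta_{n+1}p_n+p_{n-1}}{\beta_{n+1}q_n+q_{n-1}},
\]
where $\alpha_{n+1},\beta_{n+1}$ are the complete quotients. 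If either expansion terminates at index $n$, that number simply equals $p_n/q_n$.

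The plan is to compare each of $\alpha,\beta$ with $p_n/q_n$ and then apply the ultrametric inequality:
\[
|\alpha-\beta|\le\max\bigl\{|\alpha-p_n/q_n|,\,|\beta-p_n/q_n|\bigr\}.
\]

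\textbf{Key step.} We show that $|\gamma - p_n/q_n| \le 1/|q_n|^2$ for $\gamma\in\{\alpha,\beta\}$.

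If $\gamma=p_n/q_n$ the difference is $0$. Otherwise, combining the identity above with $p_nq_{n-1}-q_np_{n-1}=(-1)^n$ gives
\[
\gamma-\frac{p_n}{q_n}=\frac{(-1)^{n+1}}{q_n(\gamma_{n+1}q_n+q_{n-1})}.
\]
By Proposition~\ref{prop: convprop}~a), $|\gamma_{n+1}|=|a_{n+1}|\ge\mu$, since partial quotients of index $\ge1$ have positive degree. By Proposition~\ref{prop: convprop}~c), $|q_{n-1}|<|q_n|$ (for $n=0$, $q_{-1}=0$). Hence the denominator has absolute value $|\gamma_{n+1}||q_n|^2>|q_n|^2$. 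Alternatively, this is exactly Proposition~\ref{prop: convprop}~d) applied to each of $\alpha,\beta$, which gives $1/|q_{n+1}q_n|<1/|q_n|^2$.

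Either way, both distances are less than $1/|q_n|^2$, and the ultrametric inequality yields the claim (in fact with strict inequality).

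The only subtle point is the bookkeeping for degenerate cases: finite expansions, and the case $n=0$, where $q_0=1$. In both cases the bound still holds, trivially. No real obstacle is expected.
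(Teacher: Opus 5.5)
Your argument is correct. The paper gives no proof of this proposition and simply lists it among the classical properties recalled in Section~\ref{subsec: continued}. Your derivation is the standard one: $p_n,q_n$ depend only on $a_0,\ldots,a_n$, and Proposition~\ref{prop: convprop}~d) gives $|\gamma-p_n/q_n|<|q_n|^{-2}$ for $\gamma\in\{\alpha,\beta\}$. The ultrametric inequality then finishes the proof, in fact with strict inequality, and your separate treatment of finite expansions is sound.
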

Throughout the paper, for any $\alpha \in K((T^{-1}))$ algebraic over $K(T)$, we denote its height by $H(\alpha)$.

\begin{Lemma}[{\cite[Lemma~1]{HMT}}]
\label{Lem: HeiHbaib}
Let $\alpha \in K((T^{-1}))$ be algebraic of degree $d$ whose continued fraction expansion is $\alpha=[a_0, a_1, \ldots, a_{t-1}, \alpha_t]$ where $a_0, \ldots, a_{t-1} \in K[T]$, $\alpha_t \in K((T^{-1}))$. If $|\alpha| \ge 1$ and $|\alpha_t|>1$, then $\alpha_t$ is algebraic of degree $d$ and
\[H(\alpha_t) \le H(\alpha) \left |\prod_{i=0}^{t-1} a_i \right |^{d-2}.\]
\end{Lemma}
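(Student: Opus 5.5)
The plan is to write $\alpha_t$ explicitly as a root of a transformed minimal polynomial and to estimate the coefficients of that polynomial by Taylor expansion around $\alpha$.

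Let $P(X)=\sum_{j=0}^d c_jX^j\in K[T][X]$ be the minimal polynomial of $\alpha$, with $H(\alpha)=\max_j|c_j|$. From the identity $\alpha=(\alpha_t p_{t-1}+p_{t-2})/(\alpha_t q_{t-1}+q_{t-2})$, $\alpha_t$ is a root of
\[
Q(Y)=(q_{t-1}Y+q_{t-2})^d\,P\!\left(\frac{p_{t-1}Y+p_{t-2}}{q_{t-1}Y+q_{t-2}}\right)\in K[T][Y].
\]
The M\"obius map has determinant $\pm1$ and is therefore invertible over $K[T]$. Hence $Q$ is non-zero of degree at most $d$, and $K(T)(\alpha)=K(T)(\alpha_t)$. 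It follows that $\alpha_t$ is algebraic of degree exactly $d$, and $H(\alpha_t)$ is at most the maximum absolute value of the coefficients of $Q$.

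To bound these coefficients I will not expand $P$ directly, since that only gives $|p_{t-1}|^d$. Instead I expand $P$ around its root $\alpha$:
\[
P(x)=\sum_{k=1}^d\frac{P^{(k)}(\alpha)}{k!}(x-\alpha)^k,
\]
where the $k=0$ term vanishes. With $\delta_i=p_i-\alpha q_i$ this gives
\[
Q(Y)=\sum_{k=1}^d\frac{P^{(k)}(\alpha)}{k!}(\delta_{t-1}Y+\delta_{t-2})^k(q_{t-1}Y+q_{t-2})^{d-k}.
\]
By Proposition~\ref{prop: convprop}, $|\delta_i|=1/|q_{i+1}|$, and the sequence $|q_i|$ is increasing. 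Therefore every coefficient of the $k$-th summand is at most
\[
\Bigl|\tfrac{P^{(k)}(\alpha)}{k!}\Bigr|\,|q_{t-1}|^{-k}\,|q_{t-1}|^{d-k}\le \Bigl|\tfrac{P^{(k)}(\alpha)}{k!}\Bigr|\,|q_{t-1}|^{d-2},
\]
using the ultrametric inequality and $k\ge1$. This is where the exponent $d-2$ comes from: the vanishing of $P(\alpha)$ saves two factors of $|q_{t-1}|$.

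It remains to bound $|P^{(k)}(\alpha)/k!|$, which is at most $H(\alpha)|\alpha|^{d-k}$ since $|\alpha|\ge1$. Since $|\alpha|=|a_0|$ and $|q_{t-1}|=|a_1|\cdots|a_{t-1}|$, combining gives a bound of the shape
\[
H(\alpha)\,|a_0|^{d-k}\,|a_1\cdots a_{t-1}|^{d-2}.
\]

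The step I expect to be the main obstacle is the power of $|a_0|$. The crude estimate above yields $|a_0|^{d-1}$ from the $k=1$ term, one more than the claimed $|a_0|^{d-2}$. I would first try to gain this factor by treating the $k=1$ coefficient separately, writing $P'(\alpha)$ through the factorization $P=c_d\prod_i(X-\alpha^{(i)})$ over the conjugates. As a fallback, I would reduce to the one-step case $\alpha=a_0+1/\alpha_1$, where $Q(Y)=\sum_k\frac{P^{(k)}(\alpha)}{k!}(1-Y/\alpha_1)^kY^{d-k}$, and then induct on $t$. In the inductive step the hypothesis $|\alpha_i|=|a_i|>1$ is used at each stage, and one must check that the normalization of the height (for instance, dividing $Q$ by the content of its coefficients) absorbs the extra factor $|a_0|$. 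I would verify this bookkeeping before finalizing the exponent.
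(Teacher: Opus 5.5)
Your reduction is correct as far as it goes. $Q$ is primitive of exact degree $d$ because the substitution is unimodular over $K[T]$. The Taylor expansion about $\alpha$ is legitimate in characteristic zero. The bound $|q_{t-1}|^{d-2k}$ on the $k$-th summand produces the factor $|a_1\cdots a_{t-1}|^{d-2}$ for all $t$ at once. (The paper itself gives no proof of this lemma; it is quoted from \cite[Lemma~1]{HMT}.) However, the proposal stops at the one non-routine step, so there is a genuine gap: for $k=1$ you need $|P'(\alpha)|\le H(\alpha)|\alpha|^{d-2}$, and neither suggested repair delivers it.

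Your fallback is circular. The one-step case is just $t=1$ of your own computation, where the coefficient of $Y^{d-1}$ is $P'(a_0)$, with the same crude bound $H(\alpha)|a_0|^{d-1}$. Dividing by the content cannot absorb anything, since $Q$ has the same content as the primitive polynomial $P$; this is exactly the unimodularity you already used. Iterating a lossy one-step bound would only give $|a_i|^{d-1}$ at every step.

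Your first suggestion, $P'(\alpha)=c_d\prod_{i\ge2}(\alpha-\alpha^{(i)})$, by itself only gives $|P'(\alpha)|\le|c_d|\,|\alpha|^{d-1}\prod_{i\ge2}\max(1,|\alpha^{(i)}|)$. Turning this into $H(\alpha)|\alpha|^{d-2}$ requires $|c_d|\prod_{i}\max(1,|\alpha^{(i)}|)\le H(\alpha)$. That is the non-trivial half of Gauss's lemma (multiplicativity of the Gauss norm), which you never invoke.

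The missing idea is that $P(\alpha)=0$ together with $|\alpha|\ge 1$ forces $H(\alpha)$ to contain a full factor $|\alpha|$. You can see this without conjugates. Write $P(X)=(X-\alpha)R(X)$ with $R(X)=\sum_{j=0}^{d-1}r_jX^j\in K((T^{-1}))[X]$, so that $c_j=r_{j-1}-\alpha r_j$ (with $r_{-1}=0$). Let $j$ be the smallest index with $|r_j|=\|R\|:=\max_i|r_i|$. Then $|\alpha r_j|\ge|r_j|>|r_{j-1}|$, hence $|c_j|=|\alpha|\,\|R\|$ and so $\|R\|\le H(\alpha)/|\alpha|$. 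Therefore $|P'(\alpha)|=|R(\alpha)|\le\|R\|\,|\alpha|^{d-1}\le H(\alpha)|\alpha|^{d-2}$.

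With this in hand, the proof closes:
\begin{itemize}
\item the $k=1$ summand is bounded by $H(\alpha)|a_0|^{d-2}|q_{t-1}|^{d-2}$;
\item the summands with $k\ge 2$ are already bounded by your estimate $H(\alpha)|a_0|^{d-k}|q_{t-1}|^{d-2k}\le H(\alpha)|a_0|^{d-2}|q_{t-1}|^{d-2}$;
\item $|q_{t-1}|=|a_1\cdots a_{t-1}|$.
\end{itemize}
Together these give the lemma directly for every $t$, with no induction needed.
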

Given $\alpha \in K((T^{-1}))$ algebraic with minimal polynomial $X^m + \frac{b_{m-1}}{\bar{b}_{m-1}} X^{m-1} + \frac{b_{m-2}}{\bar{b}_{m-2}} X^{m-2} + \cdots + \frac{b_0}{\bar{b}_{0}}$ with $b_0, \ldots, b_{m-1}, \bar{b}_0, \ldots, \bar{b}_{m-1} \in K[T]$ and $\gcd(\bar{b}_{i}, b_i) =1$ for $i=0, \ldots,m-1$, we define $\sigma(\alpha) = \gcd(\bar{b}_{0}, \ldots, \bar{b}_{m-1})$.

\begin{Lemma}[{\cite[Lemma~3]{HMT}}]
\label{Lem: fgHeiHbaib}
Let $\alpha, \beta \in K((T^{-1}))$ be algebraic of degrees $d$ and $m$, respectively. If $\beta$ is reduced (i.e., $|\beta|>1$ and all of its conjugates have norm less than $1$) then
\[|\alpha-\beta| \ge \frac{1}{H(\alpha)^{m} |\beta|^{d-2} |\sigma(\beta)|^{\max \{m-1,m(d-m+2)-1 \}}}.\]
\end{Lemma}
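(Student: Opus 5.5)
The plan is a Liouville-type norm argument: evaluate the minimal polynomial of $\alpha$ at all conjugates of $\beta$, and compare an integral, nonzero quantity with the product of the resulting values. Write the minimal polynomial of $\alpha$ over $K[T]$ as a primitive polynomial
\[
P(X)=c_dX^d+\cdots+c_0\in K[T][X],
\]
normalised so that $H(\alpha)=\max_k|c_k|$. Let $\beta=\beta_1,\ldots,\beta_m$ be the conjugates of $\beta$ in an algebraic closure of $K((T^{-1}))$, with the absolute value extended. We may assume $\alpha\neq\beta$, since otherwise the statement is vacuous. Then $P(\beta)\neq 0$: either $\beta$ is not a root of $P$, or $P$ is also the minimal polynomial of $\beta$, which forces $d=m$. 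That second situation has to be handled separately, for instance by working with a conjugate of $\alpha$ or by replacing $P(\beta)$ with $(\beta-\alpha)$ times a suitable cofactor. The central object is
\[
N=\prod_{j=1}^m P(\beta_j)\in K(T)^{\times},
\]
which is symmetric in the $\beta_j$. It is therefore a polynomial in the coefficients $b_i/\bar b_i$ of the minimal polynomial of $\beta$, with coefficients built from the $c_k$.

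\textbf{Step 1 (integrality).} Find an explicit $D\in K[T]$, a power of the denominators controlled by $\sigma(\beta)$, such that $DN\in K[T]\setminus\{0\}$. Then $|D|\,|N|\ge 1$. The exponent $\max\{m-1,\,m(d-m+2)-1\}$ should come from this bookkeeping. As a polynomial in the elementary symmetric functions of the $\beta_j$, $N$ has total degree at most $d$ in each $\beta_j$, so the worst-case denominator is a power of the common denominator of the monic minimal polynomial. One then reduces using the size information on the conjugates, namely that $|\beta_j|<1$ for $j\ge2$. I expect this step to be the main obstacle: obtaining precisely this exponent, with $\sigma(\beta)$ defined as a $\gcd$ rather than an $\operatorname{lcm}$, requires careful tracking of which monomials actually occur. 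I would first try to prove it by expanding $N$ as a resultant $\operatorname{Res}(P,Q_\beta)$, where $Q_\beta$ is the monic minimal polynomial of $\beta$, and clearing denominators row by row in the Sylvester matrix.

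\textbf{Step 2 (upper bounds on each factor).} For $j\ge2$ we have $|\beta_j|<1$, so the ultrametric inequality gives $|P(\beta_j)|\le H(\alpha)$. For $j=1$, since $P(\alpha)=0$,
\[
P(\beta)=P(\beta)-P(\alpha)=(\beta-\alpha)\sum_k c_k\sum_{i<k}\beta^{i}\alpha^{k-1-i}.
\]
Hence $|P(\beta)|\le|\beta-\alpha|\,H(\alpha)\max(|\alpha|,|\beta|)^{d-1}$. We may assume $|\alpha-\beta|<1$, since otherwise the bound is trivial; then $|\alpha|=|\beta|$, and this becomes $|\beta-\alpha|\,H(\alpha)|\beta|^{d-1}$. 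To reach the sharper power $|\beta|^{d-2}$, I would instead use the factorisation $P(\beta)=c_d\prod_i(\beta-\alpha_i)$ over the conjugates $\alpha_i$ of $\alpha$. I would then bound $|c_d|\prod_{i\neq1}|\beta-\alpha_i|$ by comparing it with the coefficients of $P(X)/(X-\alpha)$, which saves one factor of $|\beta|$, and absorb any leftover into the $\sigma(\beta)$ exponent of Step 1.

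\textbf{Step 3 (conclusion).} Combining the two steps gives
\[
1\le |D|\,|N|\le |D|\,H(\alpha)^{m-1}\cdot H(\alpha)|\beta|^{d-2}|\alpha-\beta|,
\]
which rearranges to the stated inequality once $|D|\le|\sigma(\beta)|^{\max\{m-1,\,m(d-m+2)-1\}}$ is established in Step 1.
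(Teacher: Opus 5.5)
The paper gives no proof of this lemma; it quotes it from \cite[Lemma~3]{HMT}. Your argument therefore has to stand on its own, and it breaks at Step 1. That step is not bookkeeping. With $\sigma(\beta)$ defined as a gcd it cannot be carried out, because the inequality as stated here is false.

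What clears the denominators of $N$ is $L^d$, where $L=\operatorname{lcm}(\bar b_0,\ldots,\bar b_{m-1})$ is the leading coefficient of the primitive minimal polynomial $LQ_\beta$ of $\beta$. Indeed $L^dN=\pm\operatorname{Res}(P,LQ_\beta)$, and the gcd of the $\bar b_i$ does not control this.

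Here is a counterexample, with $H$ normalised as in your proposal. Let $\beta$ be the root of $X^2-TX+T^{-2}$ with $|\beta|=\mu$. Its conjugate $\beta'=T-\beta$ has $|\beta'|=\mu^{-3}$. So $\beta$ is reduced, $m=2$, $\sigma(\beta)=\gcd(T^2,1)=1$, and $L=T^2$. Put
\[
P(X)=T^2\bigl(X^2-TX+T^{-2}\bigr)+(X-T)^2=(T^2+1)X^2-(T^3+2T)X+(T^2+1).
\]
This polynomial is primitive and irreducible, since its discriminant $T^6-4T^2-4$ is not a square. Its roots are $\alpha,\alpha^{-1}$ with $|\alpha|=\mu$, so $d=2$ and $H(\alpha)=\mu^3$. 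Then $P(\beta)=(\beta-T)^2=\beta'^2$, hence $\mu^{-6}=|T^2+1|\,|\beta-\alpha|\,|\beta-\alpha^{-1}|=\mu^3|\beta-\alpha|$. So $|\alpha-\beta|=\mu^{-9}$, while the lemma claims $|\alpha-\beta|\ge H(\alpha)^{-2}|\sigma(\beta)|^{-3}=\mu^{-6}$.

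Correspondingly, $N=P(\beta)P(\beta')=(\beta\beta')^2=T^{-4}$. No $D$ with $|D|\le|\sigma(\beta)|^{3}=1$ can make $DN$ a nonzero polynomial.

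The same recipe covers the case $m=2$, $d\ge 3$ used in the paper. Take $\beta$ a root of $X^2-TX+T^{-4}$ and $P=T^4(X^2-TX+T^{-4})+(X-T)^3$. This gives a cubic $\alpha$ with $|\alpha-\beta|=\mu^{-20}$, against a claimed lower bound of $\mu^{-11}$.

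What your method does prove is the inequality with $\sigma(\beta)$ replaced by $L$. The loose ends in Steps 2--3 close as follows.

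\begin{itemize}
\item If $P(\beta)=0$ and $\alpha\neq\beta$, then $\alpha$ is a conjugate of $\beta$. Hence $|\alpha|<1$ and $|\alpha-\beta|=|\beta|>1$, which exceeds the right-hand side when $d\ge2$.
\item For the factor $|\beta|^{d-2}$, absorbing the excess into $\sigma(\beta)$ is not legitimate, since $|\beta|$ and $|\sigma(\beta)|$ are unrelated. It is also not needed. From $P(\alpha)=0$ and $|\alpha|>1$ you get $|c_d||\alpha|^d\le\max_{k<d}|c_k||\alpha|^k\le H(\alpha)|\alpha|^{d-1}$, that is, $|c_d\alpha|\le H(\alpha)$. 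So when $|\alpha|=|\beta|$, every term $c_k\beta^i\alpha^{k-1-i}$ of your difference quotient has absolute value at most $H(\alpha)|\beta|^{d-2}$.
\end{itemize}

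Hence $1\le|L|^d|N|\le|L|^dH(\alpha)^m|\beta|^{d-2}|\alpha-\beta|$. Since $d\le\max\{m-1,m(d-m+2)-1\}$ for all $m,d\ge1$, this gives the lemma with the lcm in place of the gcd.

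This corrected version is all the proof of Theorem~\ref{Thm: mainquasiperiodic}~a) uses. There only $|\sigma(\beta^{(i)})|\le|q'_{r_i-1}|$ enters, and the lcm of the denominators of $X^2+\frac{q'_{r_i-2}-p'_{r_i-1}}{q'_{r_i-1}}X-\frac{p'_{r_i-2}}{q'_{r_i-1}}$ also divides $q'_{r_i-1}$.
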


\section{A quantitative version of Uchiyama's theorem}\label{sec: quantit}
This section is mainly devoted to proving Theorem~\ref{Thm: quantUchi}. We first prove the result for an algebraic element integral over $K[T]$ (Theorem~\ref{thm: DR}) and then extend it to arbitrary algebraic elements. We conclude by deriving Theorem~\ref{Thm: DRfunfields}, which will be used in the next section to prove Theorem~\ref{Thm: mainquasiperiodic}~a). In the following, we assume $K$ with characteristic zero.

Uchiyama proved Theorem~\ref{Thm: Uchiyama} in a way similar to the proof of Roth's theorem. 
Let $\alpha \in K((T^{-1}))$, $\alpha \ne 0$, be integral over $K[T]$ of degree $d \ge 2$ over $K(T)$, i.e., its minimal polynomial is $f(X)=X^{d}+e_{1}X^{d- 1}+\cdots+e_{d}$ in $K[T][X]$.
First, we introduce the notation. 
For all $\varepsilon > 0$, let $m \in \mathbb N$ and $\delta \in \mathbb R$ such that
\begin{equation}\label{eq: U0}
m > d \sqrt{2m}, \quad \cfrac{2m}{m-d\sqrt{2m}} < 2 + \varepsilon, \quad 0 < \delta < 1
\end{equation}
\begin{equation}\label{eq: U3}
2 \eta + (1 + 2 \delta)d\sqrt{2m} < m
\end{equation}
\begin{equation}\label{eq: U1}
\frac{2 m (1+ \delta) + 2 \delta(1+ \delta)}{m-(1+2\delta)d\sqrt{2m}-2 \eta} < 2+\varepsilon, 
\end{equation}
where \[\eta = 7^m \delta^{(1/2)^m}.\] 
Then, Uchiyama \cite[Section~7]{Uchi} proved that the inequality
\begin{equation}\label{eq: U-inequality}
\left | \alpha - \frac{p}{q} \right | < \frac{1}{|q|^{2+\varepsilon}}
\end{equation}
cannot have $m$ solutions $(s_1, t_1), \ldots, (s_m, t_m) \in K[T]^2$, with $\gcd(s_i, t_i) = 1$ for all $i=1, \ldots, m$, that satisfy

\begin{equation}\label{eq: U4}
\log |t_1| > (1 + m \log H(\alpha))\delta^{-2}
\end{equation}
and
\begin{equation}\label{eq: U5}
\frac{\log |t_{j}|}{\log |t_{j-1}|} > \frac{2}{\delta} \qquad 2 \le j \le m.
\end{equation}
Thanks to this, we can prove a quantitative version of Theorem~\ref{Thm: Uchiyama}, analogous to the quantitative version of Roth’s theorem proved by Davenport and Roth \cite[Theorem~1, Corollary~2]{DR55}.

\begin{remark}
Let $K$ be an infinite field, let $\alpha \in K((T^{-1}))$ be algebraic over $K(T)$ and $\varepsilon>0$. Then, \eqref{eq: U-inequality} may have infinitely many solutions $(p,q)$ if we do not require the condition $\gcd(p,q)=1$. Indeed, suppose
\begin{equation}\label{eq: coprimecond}
\left | \alpha - \frac{p}{q} \right | < \frac{1}{\mu^{2+\varepsilon}|q|^{2+\varepsilon}}.
\end{equation}
Then, if \eqref{eq: coprimecond} has a solution $(p,q)$, then it has the (infinitely many) solutions \[((T- \lambda)p,(T- \lambda)q) \]
for all $\lambda \in K$. Indeed,
\[\left | \alpha - \frac{(T- \lambda)p}{(T-\lambda)q} \right | = \left | \alpha - \frac{p}{q} \right | < \frac{1}{\mu^{2+\varepsilon}|q|^{2+\varepsilon}}=\frac{1}{|(T-\lambda)q|^{2+\varepsilon}}.\]
An example is obtained as follows. Let $\beta \in K((T^{-1}))$ be algebraic, and set \[\alpha = \beta \cdot T^{- \deg \lfloor \beta \rfloor  - 4} + T, \qquad \varepsilon = 1, \qquad p=T, \qquad q=1.\] Then $\alpha$ is algebraic and inequality~\eqref{eq: coprimecond} holds.
\end{remark}

In the next theorem, we first deal with integral elements.

\begin{Theorem}\label{thm: DR}
Let $\alpha \in K((T^{-1}))$ be integral over $K[T]$ of degree $d\ge 2$. Then, for every $0<\varepsilon \le \frac{1}{3}$, the number of solutions of
\begin{equation*}
\left | \alpha - \frac{p}{q} \right | < \frac{1}{ |q|^{2+\varepsilon}},
\end{equation*}
with $\gcd(p,q)=1$, is less than $\exp(Cd^2\varepsilon^{-2})$ for some constant $C$ depending only on $\alpha$.
\end{Theorem}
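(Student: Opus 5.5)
We follow the Davenport--Roth strategy, using Uchiyama's non-existence statement recalled above as a black box. First we fix the parameters in \eqref{eq: U0}--\eqref{eq: U1} as functions of $d$ and $\varepsilon$ only. We take $m$ to be the least integer with $m \ge c_1 d^2\varepsilon^{-2}$ for a suitable absolute $c_1$. Then $d\sqrt{2m}/m \asymp \varepsilon/ c_1^{1/2}$ is a small multiple of $\varepsilon$, and both $m>d\sqrt{2m}$ and $\frac{2m}{m-d\sqrt{2m}}<2+\varepsilon$ hold, with room to spare of order $\varepsilon$. Next we choose $\delta$ so small that $\eta=7^m\delta^{(1/2)^m}$ satisfies $2\eta<\varepsilon m/100$, and so that the extra terms $2m\delta+2\delta(1+\delta)$ in \eqref{eq: U1} are $O(\varepsilon m)$. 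It suffices to take
\[
\log(1/\delta)= 2^m\bigl(m\log 7+\log(100/\varepsilon)\bigr),
\]
so that $\log(1/\delta)\le \exp(c_2 m)$. Since $\varepsilon\le 1/3$ and $d\ge2$, all these conditions can be verified by routine estimates.

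We then split the solutions $(p,q)$ into large and small ones. A solution is \emph{large} if
\[
\log|q| > B:=(1+m\log H(\alpha))\delta^{-2}.
\]
Order the large solutions by $\log|q|$ and group them greedily into chains. Each new chain element must have $\log|q|$ exceeding $2/\delta$ times the previous one. If there were $m$ large solutions forming such a chain, this would contradict Uchiyama's statement via \eqref{eq: U4} and \eqref{eq: U5}. So we need to count large solutions whose degrees lie in one interval $[L,2L/\delta]$. Here we use the gap principle. For two distinct coprime solutions $p/q\ne p'/q'$ with $|q|\le|q'|$ we have
\[
\frac{1}{|qq'|}\le\left|\frac pq-\frac{p'}{q'}\right|<\frac{1}{|q|^{2+\varepsilon}},
\]
hence $|q'|>|q|^{1+\varepsilon}$. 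In particular, distinct solutions have distinct values of $|q|$ up to the constant normalization mentioned in the Remark. Within an interval of ratio $2/\delta$, the gap principle then allows at most $1+\log(2/\delta)/\log(1+\varepsilon)$ solutions. Hence the large solutions number at most
\[
m\bigl(1+\log(2/\delta)/\log(1+\varepsilon)\bigr)\ll m\,\varepsilon^{-1}\exp(c_2m).
\]

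For small solutions, $1\le \log|q|\le B$, the gap principle again gives at most $1+\log B/\log(1+\varepsilon)$ of them. Solutions with $|q|=1$ are finitely many: $p$ is then determined as the polynomial part of $\alpha$ up to constants, so there is at most one. We have $\log B\le \log(1+m\log H(\alpha))+2\log(1/\delta)\le C_\alpha\exp(c_2m)$. The total is therefore $\ll_\alpha \varepsilon^{-1}\exp(c_2 m)$. With $m\asymp d^2\varepsilon^{-2}$ this is at most $\exp(Cd^2\varepsilon^{-2})$, with $C$ depending on $\alpha$ only through $\log H(\alpha)$ and an absorbed constant.

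The main obstacle is bookkeeping in the parameter choice. We must make sure that $\delta$, which is forced to be doubly exponentially small in $m$ by the $\eta$ term, only contributes $\log\log(1/\delta)=O(m)$ to the count. That is why the gap principle, with its logarithmic dependence, is essential. We must also check that \eqref{eq: U1} really holds with $m$ of order $d^2\varepsilon^{-2}$ rather than something larger. If the margin in \eqref{eq: U1} is too tight, I would first try to absorb the loss by replacing $\varepsilon$ with $\varepsilon/2$ in \eqref{eq: U0}, which only changes the constant $C$.
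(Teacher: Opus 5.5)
Your proposal is correct and follows essentially the paper's argument. You use Uchiyama's non-existence statement as a black box, take $m \asymp d^2\varepsilon^{-2}$ and $\delta$ doubly exponentially small in $m$, and rely on the gap principle $\log|q'|>(1+\varepsilon)\log|q|$ so that $\log(1/\delta)$ enters only logarithmically. The differences are cosmetic: the paper takes $\eta=1$ (i.e.\ $\delta=7^{-m2^m}$) and builds the forbidden chain directly from the solutions with indices $k,k+\ell,\dots,k+(m-1)\ell$, whereas you take $\eta=\varepsilon/100$ and cover the large solutions by at most $m-1$ greedy intervals of ratio $2/\delta$, which yields the same bound.
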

\begin{proof}
Let $m = \lfloor 100 d^2 \varepsilon^{-2} \rfloor + 1$ and $\delta$ such that $\eta = 1$. We prove that this choice of $m$ and $\delta$ satisfies \eqref{eq: U0}, \eqref{eq: U3}, and \eqref{eq: U1}.
Substituting the above value of $m$ and $\delta = \frac{1}{7^{m 2^m}}$ into equations \eqref{eq: U0}, it is straightforward to verify that they hold.

We now prove that \eqref{eq: U3} holds. Since $0< \delta <1$, we have
\[2+(1+2 \delta)d \sqrt{2m}< 2+3d\sqrt{2m}.\]
We show that if $m \ge 19d^2$, then
\begin{equation}\label{eq: U3dim}
2+3d\sqrt{2m} < m.
\end{equation}
Let $g(m):=m-3d\sqrt{2m}-2$. We see that $g'(m) \ge 0$ for all $m \ge \frac{9}{2}d^2$ and $g(19d^2) = (19 - 3 \sqrt{38})d^2 - 2>0$ for $d \ge 2$. Therefore, \eqref{eq: U3dim} holds for all $m \ge 19d^2$, and so does \eqref{eq: U3}. Since $m = \lfloor 100 d^2 \varepsilon^{-2} \rfloor + 1 \ge 900d^2 \ge 19d^2$, the claim follows.

We now prove that \eqref{eq: U1} holds. When the denominator on the left-hand side of \eqref{eq: U1} is positive, this is equivalent to
\[2m\delta + 2\delta(1+\delta)+2(1+2\delta)d\sqrt{2m} + 4+ \varepsilon (1+2\delta)d\sqrt{2m} + 2\varepsilon < \varepsilon m.\]
If the denominator is negative, then the inequality is immediately satisfied.
Using $\delta = \frac{1}{7^{m 2^m}}<1$, $m\delta \le 1$, $4\delta d \sqrt{2m}<1$, and $m<121 d^2 \varepsilon^{-2}$, we derive the following chain of inequalities
\[\begin{aligned}
&2m\delta + 2\delta(1+\delta)+2(1+2\delta)d\sqrt{2m} + 4+ \varepsilon (1+2\delta)d\sqrt{2m} + 2\varepsilon \\
&< 13 + 2d \sqrt{2m} + \varepsilon (1+2\delta)d \sqrt{2m} \\
&< 13 + 22 \sqrt{2} d^2 \varepsilon^{-1} + 11 \sqrt{2}d^2 + 22 \sqrt{2} \delta d^2 \\
&< 14 + 33 \sqrt{2}d^2 \varepsilon^{-1} < \varepsilon m
\end{aligned}\]
proving \eqref{eq: U1}.
Let $(s_1,t_1), (s_2,t_2), \ldots$ with $|t_1|<|t_2|<|t_3|<\cdots$ solutions of \eqref{eq: inequality}, then for all $i\ge 1$
\[\frac{1}{|t_i t_{i+1}|} \le \left | \frac{s_i}{t_i} - \frac{s_{i+1}}{t_{i+1}}\right | \le \max \left \{ \left | \alpha - \frac{s_i}{t_i} \right |, \left | \alpha - \frac{s_{i+1}}{t_{i+1}}\right | \right \} < \frac{1}{|t_i|^{2+\varepsilon}},\]
which implies
\[ \frac{\log |t_{i+1}|}{\log |t_{i}|}>1+ \varepsilon.\]

Let $k>0$ be the smallest integer such that $(1 + \varepsilon)^{k-2}>(1 + m \log H(\alpha))\delta^{-2}$ (see \eqref{eq: U4}). Then, 
\[\log |t_k| > (1 + \varepsilon)^{k-2} \log |t_2| >(1 + \varepsilon)^{k-2}> (1 + m \log H(\alpha))\delta^{-2},\]
using the fact that $\log |t_2| > 1$.

Let $\ell$ be the least integer such that $(1 + \varepsilon)^{\ell}>2 \delta^{-1}$, then
\[\frac{\log |t_{i + \ell}|}{\log |t_{i}|} > \frac{2}{\delta}\]
for all $i \geq 1$. The number of solutions of \eqref{eq: inequality} is less than $k + (m-1)\ell$, otherwise
\[(s_{k}, t_{k}), \ (s_{k+\ell},t_{k+\ell}),  \ldots, \ (s_{k+(m-1)\ell},t_{k+(m-1)\ell})\]
are $m$ solutions of \eqref{eq: inequality} satisfying \eqref{eq: U4} and \eqref{eq: U5}, contradicting Uchiyama's result.

The remainder of the proof is devoted to deriving an upper bound for $k + (m-1)\ell$. The following hold
\[\log (\delta^{-1}) = m 2^m \log 7,\quad \log (1 + \varepsilon) \ge \frac{\varepsilon}{2}, \quad  \ell \log(1 + \varepsilon) > \log (2\delta^{-1}),\]
hence
\begin{equation}\label{eq: leastl}
\ell \ge \frac{\log(2 \delta^{-1})}{\log (1 + \varepsilon)}.\end{equation}
Since $\ell$ is the least integer satisfying \eqref{eq: leastl}, we have \[\ell \le \frac{\log (2 \delta^{-1})}{\log(1+\varepsilon)} + 1 \le \frac{\log 2 + m 2^m \log 7}{\varepsilon/2}+1,\] and so there exists $c_1>0$ such that \[\ell \le c_1 \frac{m 2^m}{\varepsilon}.\]
Moreover, $(1 + \varepsilon)^{k-2}>(1 + m \log H(\alpha))\delta^{-2}$, that is 
\[(k-2) \log(1+\varepsilon)>\log (1 + m \log H(\alpha)) + 2 \log(\delta^{-1}),\] and so
\[k > \frac{\log (1 + m \log H(\alpha)) + 2 \log(\delta^{-1})}{\log(1+\varepsilon)} + 2.\]
Since $k$ is the least integer satisfying the above inequality, we have
\[k \le \frac{\log (1 + m \log H(\alpha)) + 2 \log(\delta^{-1})}{\log(1+\varepsilon)} + 3 \le \frac{\log (1 + m \log H(\alpha)) + m 2^{m+1} \log 7 }{\varepsilon/2} + 3.\]
Therefore, there exists $c_2 > 0$ such that
\[k \le c_2 \frac{m 2^m}{\varepsilon}.\]
Combining these results, we obtain
\[k + (m-1) \ell \le c_2 \frac{m 2^m}{\varepsilon} + (m-1) \left ( c_1 \frac{m2^m}{\varepsilon} \right ) \le c_3\frac{m^2 2^m}{\varepsilon},\]
for a suitable $c_3>0$.
Since $m = \lfloor 100 d^2 \varepsilon^{-2}\rfloor + 1$, there exists $c_4 >0$ such that
\[k + (m-1)\ell \le \exp(c_4 d^2 \varepsilon^{-2}).\]\end{proof}

The remainder of this section is devoted to generalizing the results of Theorem~\ref{thm: DR} to the case where $\alpha$ is algebraic over $K(T)$ (not necessarily integral). The following proposition will be used to prove this result.

\begin{Proposition}\label{prop: ineqAPQ}
Let $h, s_1, t_1, s_2, t_2 \in K[T]$, $\alpha \in K((T^{-1}))$, and $\varepsilon>0$. 
Suppose that $s_1/t_1 \not= s_2/t_2$ and satisfy
\[\left | \alpha - \frac{h s_i}{t_i} \right | < \frac{|h|}{|t_i|^{2+ \varepsilon}}\]
with $\gcd(s_i,t_i)=1$ and $i=1,2$. Then $|t_1| \neq |t_2|$.
\end{Proposition}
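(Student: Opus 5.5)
We argue by contradiction, supposing $|t_1| = |t_2|$, and compare the two approximations via the ultrametric inequality. Since $s_1/t_1 \neq s_2/t_2$, we have $s_1 t_2 - s_2 t_1 \neq 0$. This is a nonzero polynomial in $K[T]$, so $|s_1t_2 - s_2t_1| \ge 1$. Hence
\[
\left| \frac{h s_1}{t_1} - \frac{h s_2}{t_2} \right| = \frac{|h|\,|s_1 t_2 - s_2 t_1|}{|t_1|\,|t_2|} \ge \frac{|h|}{|t_1||t_2|}.
\]
Note that $h\neq 0$ is implicit, since otherwise the hypothesis $|\alpha|<0$ is impossible.

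On the other hand, the strong triangle inequality gives
\[
\left| \frac{h s_1}{t_1} - \frac{h s_2}{t_2} \right| \le \max_{i=1,2} \left| \alpha - \frac{h s_i}{t_i} \right| < \max_{i=1,2}\frac{|h|}{|t_i|^{2+\varepsilon}}.
\]
If $|t_1|=|t_2|=:\tau$, combining the two bounds yields $|h|/\tau^2 < |h|/\tau^{2+\varepsilon}$, that is $\tau^{\varepsilon} < 1$. Since $t_i$ is a nonzero polynomial (it is a denominator), $\tau \ge 1$ and so $\tau^\varepsilon \ge 1$, a contradiction. Therefore $|t_1|\neq|t_2|$.

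The only delicate points are the nonvanishing of $s_1t_2-s_2t_1$ and the fact that the absolute value of a nonzero polynomial is at least $1$; both are immediate. In particular, the coprimality assumption is not even needed for this step.
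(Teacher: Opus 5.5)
Your proof is correct and is the paper's argument: assume $|t_1|=|t_2|$, bound $\left|\frac{hs_1}{t_1}-\frac{hs_2}{t_2}\right|$ below by $|h|/|t_1|^2$ and above by $|h|/|t_1|^{2+\varepsilon}$ via the ultrametric inequality, and reach the impossible $|t_1|^{\varepsilon}<1$. Your justifications that $|s_1t_2-s_2t_1|\ge 1$ and $h\neq 0$, and your remark that coprimality is not used, spell out points the paper leaves implicit.
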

\begin{proof}
Let us suppose $|t_1|=|t_2|$. Then, we have
\[\frac{|h|}{|t_1|^2} \le \left | \frac{hs_1}{t_1} - \frac{hs_2}{t_2} \right | = \left | \left ( \alpha - \frac{hs_1}{t_1} \right ) - \left ( \alpha - \frac{hs_2}{t_2} \right ) \right | < \frac{|h|}{|t_1|^{2 + \varepsilon}},\]
which implies
\[1 < \frac{1}{|t_1|^{\varepsilon}},\]
which is impossible.
\end{proof}

\begin{proof}[Proof of Theorem~\ref{Thm: quantUchi}]
Let $g(X)=e_dX^d + \cdots +e_1X + e_0\in K[T][X]$ be a polynomial of the smallest possible degree for which $\alpha$ is a root, and set $\beta=e_d\alpha$. Then $\beta$ is integral over $K[T]$. Indeed,
\[
\bar{g}(X)=X^d+\sum_{i=0}^{d-1}e_d^{d-1-i}e_i X^i\]
is a monic polynomial in $K[T][X]$, and
\[
\bar{g}(\beta)=\bar{g}(e_d\alpha)=e_d^{d-1}g(\alpha)=0.
\]
We denote $e_d \in K[T]$ by $a$. Since
\[
\left|\alpha-\frac{p}{q}\right|
<\frac{1}{|q|^{2+\varepsilon}}
\iff
\left|\beta-\frac{ap}{q}\right|
<\frac{|a|}{|q|^{2+\varepsilon}},
\]
it suffices to show that the inequality
\begin{equation}\label{eq: APQ}
\left|\beta-\frac{ap}{q}\right|
<
\frac{|a|}{|q|^{2+\varepsilon}},
\quad \gcd(p,q)=1,
\end{equation}
admits only finitely many solutions, and to obtain an upper bound for their number.

To do so, we show that, for all $b \mid a$, $b \in K[T]$, $b$ monic, we can bound the number of solutions of \eqref{eq: APQ} such that $b=\gcd(a,q)$. Let us fix $b \mid a$ monic and let us suppose that there exist infinitely many solutions of \eqref{eq: APQ} $\{ (s_i, t_i) \}_{i \ge 0}$ such that $b=\gcd(a,t_i)$ for all $i\ge 0$. By Proposition~\ref{prop: ineqAPQ}, $|t_i| \ne |t_j|$ for all $i \ne j$ and, without loss of generality, we can assume $|t_i| < |t_{i+1}|$ for all $i \ge 0$. Setting $a=\tilde{a}b$ and $t_i=\tilde{t}_ib$, with $\gcd(\tilde{a},\tilde{t}_i)=1$, we have
\[\left | \beta - \frac{\tilde{a}s_i}{\tilde{t}_i} \right | < \frac{|\tilde{a}|}{|\tilde{t}_i|^{2+ \varepsilon}|b|^{1+\varepsilon}}\]
for all $i \geq 0$. Since $|\tilde{t}_i|<|\tilde{t}_{i+1}|$ for all $i\ge 0$, there exists $k\ge 0$ such that
\[\frac{|\tilde{a}|}{|b|^{1+\varepsilon}}< |\tilde{t}_i|^{\varepsilon/2}\]
for all $i \geq k$ and so
\begin{equation}\label{eq: epsmezzi}\left | \beta - \frac{\tilde{a} s_i}{\tilde{t}_i} \right | < \frac{1}{|\tilde{t}_i|^{2+\varepsilon/2}}\end{equation}
for all $i \geq k$. 
Since $\beta$ is integral over $K[T]$, by Theorem~\ref{thm: DR}, the inequality~\eqref{eq: epsmezzi} cannot have infinitely many solutions $\gcd(\tilde{a}s_i, \tilde{t}_i)=1$. In particular, by Theorem~\ref{thm: DR}, the number of solutions of \[\left | \beta - \frac{p}{q}\right | < \frac{1}{|q|^{2+\varepsilon/2}}, \quad \gcd(p,q) = 1\] is at most $\exp(4Cd^2 \varepsilon^{-2})$. 
Moreover, using Proposition~\ref{prop: ineqAPQ}, the number of solutions of \eqref{eq: APQ} such that $\gcd(a,q)=b$ and $\left | \frac{q}{b} \right |\le \frac{|\tilde{a}|^{2/\varepsilon}}{|b|^{2/\varepsilon+2}}$ is at most
\[\frac{2}{\varepsilon} \deg(\tilde{a}) - \left ( 1 + \frac{2}{\varepsilon} \right )\deg(b).\]
Therefore, the number of solutions of \eqref{eq: APQ} such that $b=(a,t_i)$ for all $i \ge 0$ is less than
\[\exp(4Cd^2 \varepsilon^{-2}) + \frac{2}{\varepsilon} \deg(\tilde{a}) - \left ( 1 + \frac{2}{\varepsilon} \right )\deg(b).\]
The number of monic divisors of $a$ is at most $2^{\deg(a)}\le 2^{\log(H)}$, where 
\[H := \max \{|e_0|, \ldots, |e_d| \}.\]
Therefore, the number of solutions of \eqref{eq: inequality} is at most
\[2^{\log(H)} \left ( \exp(4Cd^2 \varepsilon^{-2}) + \frac{2}{\varepsilon} \deg(a) \right ) \le \exp(\bar{C} d^2 \varepsilon^{-2}),\]
for a suitable $\bar{C}>0$ depending only on $\alpha$.
\end{proof}

\begin{Theorem}\label{Thm: DRfunfields}Let $\alpha \in K((T^{-1}))$ be algebraic of degree $d\ge 2$. Then,
\[\log \log |q_i| < \frac{C i}{\sqrt{\log i}}\]
for all $i \geq 2$, for some constant $C$ depending only on $\alpha$, where $q_i$ is the denominator of the $i$-th convergent of $\alpha$.
\end{Theorem}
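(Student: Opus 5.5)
The plan is to imitate Davenport--Roth's argument, using Theorem~\ref{Thm: quantUchi} as the only Diophantine input. Fix $i\ge 2$ large and write $|q_j| = \mu^{\deg q_j}$ with $\deg q_j = \sum_{h=1}^{j}\deg a_h$. By Proposition~\ref{prop: convprop}~d),
\[
\left|\alpha-\frac{p_j}{q_j}\right| = \frac{1}{|q_j|^{2}|a_{j+1}|},
\]
and $\gcd(p_j,q_j)=1$. Hence $p_j/q_j$ solves \eqref{eq: inequality} with exponent $2+\varepsilon$ exactly when $\log|a_{j+1}| > \varepsilon\log|q_j|$, that is, when
\[
\log|q_{j+1}| > (1+\varepsilon)\log|q_j| .
\]
For indices $j$ where this fails we have $\log|q_{j+1}|\le(1+\varepsilon)\log|q_j|$. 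Distinct $j$ give distinct reduced fractions, even up to $K^\times$ scaling, so by Theorem~\ref{Thm: quantUchi} the number of "good" indices $j\le i-1$ is at most $N:=\exp(C_0 d^2\varepsilon^{-2})$.

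Next I bound the growth across the two kinds of steps. At a bad step, $\log\log|q_{j+1}|\le \log\log|q_j|+\log(1+\varepsilon)\le \log\log|q_j|+\varepsilon/\ln\mu$. At a good step, Mahler's Theorem~\ref{Thm: Mahler} gives $|q_j|^{-2}|a_{j+1}|^{-1}\ge C|q_j|^{-d}$. So $|a_{j+1}|\le C^{-1}|q_j|^{d-2}$, hence $\log|q_{j+1}|\le (d-1)\log|q_j|+c$, and each good step raises $\log\log|q|$ by at most $c'=\log(d)+O(1)$, provided $\log|q_j|\ge 1$. For the first couple of indices, where $|q_j|$ may equal $1$, I will just start the count at an index $j_0$ depending on $\alpha$ with $\deg q_{j_0}\ge1$; this costs only an additive constant. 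Summing over $j$, I get
\[
\log\log|q_i| \le c_\alpha + i\,\frac{\varepsilon}{\ln\mu} + c'\exp(C_0 d^2\varepsilon^{-2})
\]
for every $0<\varepsilon\le 1/3$.

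Finally I optimise $\varepsilon$. I choose
\[
\varepsilon = \frac{A d}{\sqrt{\log i}}
\]
with $A$ large, so that $C_0 d^2\varepsilon^{-2} = (C_0/A^2)\log i\le \tfrac12\log i$. The exponential term is then $O(\sqrt i)$, which is $O(i/\sqrt{\log i})$, while the main term is $O(i/\sqrt{\log i})$. This requires $\varepsilon\le 1/3$, i.e.\ $i$ larger than a constant depending on $d$, hence on $\alpha$. The finitely many small $i\ge2$ are absorbed by enlarging $C$; note that $\sqrt{\log i}>0$ for $i\ge 2$. I should double-check the base of the logarithm, but it only affects constants.

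The main obstacle is the good steps. Theorem~\ref{Thm: quantUchi} controls only how many of them there are, not their size, and Mahler's bound is what keeps each one from being catastrophic; this has to be done in $\log\log$ scale so that the jump is additive and bounded. A secondary issue is the accounting at $\log|q_j|<1$, which I handle by the shift to $j_0$.
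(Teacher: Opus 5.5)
Your proof is correct and follows essentially the same route as the paper. The increments of $\log\log|q_j|$ that you track are exactly the paper's $\log\gamma_j$ with $\gamma_j=\deg q_{j+1}/\deg q_j$, split the same way: steps with $\gamma_j\le 1+\varepsilon$ contribute at most about $i\varepsilon$, and steps with $\gamma_j>1+\varepsilon$ are counted by Theorem~\ref{Thm: quantUchi} and each bounded via Mahler's Theorem~\ref{Thm: Mahler}, before choosing $\varepsilon\asymp 1/\sqrt{\log i}$. Your extra care with $\log_\mu(1+\varepsilon)\le\varepsilon/\ln\mu$ and with the base of the logarithm in the exponential term only tidies constants that the paper handles loosely.
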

\begin{proof}
Let us define 
\[\gamma_i := \frac{\deg q_{i+1}}{\deg q_i}\]
for all $i \geq 1$ and so $|q_{i+1}| = |q_{i}|^{\gamma_i}$.
By Proposition~\ref{prop: convprop},
\[\left | \alpha - \frac{p_i}{q_i} \right | < \frac{1}{|q_iq_{i+1}|} = \frac{1}{|q_i|^{\gamma_i + 1}} \quad \text{for all } i \ge 1,\]
that is
\[\deg \left ( \alpha - \frac{p_i}{q_i} \right ) < - \deg q_i - \deg q_{i+1} = - (\gamma_i + 1)\deg q_i.\]
If $\alpha$ is algebraic of degree $d$, then by Theorem~\ref{Thm: Mahler} we have
\[\frac{c_1}{|q_i|^d} \le \left | \alpha - \frac{p_i}{q_i} \right | = \frac{1}{|q_i|^{\gamma_i + 1}}\]
for all $i \geq 0$, where $c_1>0$ is a constant depending only on $\alpha$. This implies that $(\gamma_i + 1)\deg q_i \le d \deg q_i - \log c_1$ from which 
\[\gamma_i + 1 \le d - \frac{\log c_1}{\deg q_i},\]
and so $\gamma_i +1 \le d + c_2$, where $c_2$ is a positive constant.
We have \[\deg q_i = \deg q_1 \cdot \frac{\deg q_2}{\deg q_1} \cdots \frac{\deg q_i}{\deg q_{i-1}},\] from which
\[\log (\deg q_i) = \log (\deg q_1) + \sum_{j=1}^{i-1} \log\left ( \frac{\deg q_{j+1}}{\deg q_j} \right ) = \log (\deg q_1) + \sum_{j=1}^{i-1} \log \gamma_j.\]
For all $0<\varepsilon<\frac{1}{3}$ we define
\[S_1 := \{ 1 \le j \le i-1 \mid \gamma_j \le 1+\varepsilon\}, \quad S_2 := \{ 1 \le j \le i-1 \mid \gamma_j > 1+\varepsilon\}\]
Therefore
\[\sum_{j=1}^{i-1} \log \gamma_j = \sum_{j \in S_1} \log \gamma_j + \sum_{j \in S_2} \log \gamma_j.\]
We have that \[\sum_{j \in S_1} \log \gamma_j \le \sum_{j \in S_1} \log(1+ \varepsilon) \le \sum_{j \in S_1} \varepsilon \le i \varepsilon.\]
If $\gamma_j > 1+\varepsilon$, then $|q_{j+1}|>|q_j|^{1+\varepsilon}$ and $(p_j, q_j)$ is a solution of \eqref{eq: inequality}.
Since $\deg q_j \ge j$ and the number of solutions of \eqref{eq: inequality} is less than $\exp(c_3 \varepsilon^{-2})$, we have $|S_2| < \exp(c_3 \varepsilon^{-2})$, and hence
\[\sum_{j \in S_2} \log \gamma_j < \exp(c_3 \varepsilon^{-2}) \cdot \log(d + c_2).\]
In conclusion,
\[\log \log |q_i| = \log \deg q_i < \log \deg q_1 + i \varepsilon + \exp(c_3 \varepsilon^{-2})c_4 <i \varepsilon + c_5\exp(c_3 \varepsilon^{-2}).
\]
This is the same conclusion as the one obtained by Davenport and Roth in \cite{DR55}. 
We choose $c_6 \in \mathbb R$ such that $c_6^2 > c_3$ and, for all sufficiently large $i$, set $\varepsilon = \frac{c_6}{\sqrt{\log i}}$. With this choice, we obtain
\[\begin{aligned}\log \log |q_i| <c_6 \frac{i}{\sqrt{\log i}} + c_5 i^{c_3/c_6^2} \ll \frac{i}{\sqrt{\log i}},
\end{aligned}\]
which completes the proof.
\end{proof}

\section{Proofs of the main results}\label{sec: proofs}
\subsection{Quasi-periodic continued fractions}\label{subsec: quasiperiod}
This section is devoted to proving the transcendence criteria for the class of quasi-periodic continued fractions in Theorem~\ref{Thm: mainquasiperiodic}. First, we prove part~a), where we assume that $r_i<Cn_i$ for some constant $C$, while no assumption on the absolute values of the partial quotients is required. The proof follows a similar argument to that of \cite[Theorem~3.1]{ADH2}. The difference lies in the final tool that we use to derive a contradiction. Instead of \cite[Lemma~3.3]{ADH2}, we use the finer estimate proved in Theorem~\ref{Thm: quantUchi}, which holds only over fields of characteristic zero.
Finally, we prove part~b) of Theorem~\ref{Thm: mainquasiperiodic}, where we assume that the sequence $(|q_i|^{1/i})_{i \geq 1}$ is bounded, instead of requiring the boundedness of the partial quotients. To establish this result, we exploit the Subspace Theorem~\ref{Thm: Ratliff} in a novel way that also differs from the approach used to prove the analogous result over the real numbers.

\begin{proof}[Proof of Theorem~\ref{Thm: mainquasiperiodic}~a)]

Suppose, for contradiction, that $\alpha=[a_0,a_1,\ldots]$ is algebraic of degree $d>2$. Let us denote, for all $i\geq 1$, $B_{n_i}=[a_{n_i},\ldots,a_{n_i+r_i-1}]$, and
\[\beta^{(i)}=[\overline{B_{n_i}}]=[B_{n_i},B_{n_i},\ldots].\]
Since $\beta^{(i)}$ has a purely periodic expansion, it is a reduced quadratic Laurent series, as defined in Lemma~\ref{Lem: fgHeiHbaib} (see \cite[Lemma~2.1.10]{Mala}).
Let us denote, for all $j\geq 0$,
\[\frac{p'_j}{q'_j}=[a_{n_i},\ldots,a_{n_i+j}],\]
the convergents of the continued fraction of $\beta^{(i)}$.
Then, 
\[\beta^{(i)}=[\overline{a_{n_i},\ldots,a_{n_i+r_i-1}}]=\frac{\beta^{(i)}p'_{r_i-1}+p'_{r_i-2}}{\beta^{(i)}q'_{r_i-1}+q'_{r_i-2}},\]
as the length of the block $B_{n_i}$ is equal to $r_i$. This means that
\[q'_{r_i-1}(\beta^{(i)})^2+(q'_{r_i-2}-p'_{r_i-1})\beta^{(i)}-p'_{r_i-2}=0.\]
Without loss of generality, we can assume $|\alpha|\ge 1$ and by Lemma~\ref{Lem: HeiHbaib} the $n_i$-th complete quotient of $\alpha$ satisfies
\begin{equation}\label{Eq: Hfni}
H(\alpha_{n_i}) \le H(\alpha) \left|\prod\limits_{j=0}^{n_i-1} a_j\right|^{d-2},
\end{equation}
as it is algebraic of degree $d$. Then, since $\beta^{(i)}$ and $\alpha_{n_i}$ share the first $\lambda_ir_i$ partial quotients, by Proposition~\ref{Pro: difference},
\begin{equation}\label{Eq: fgupperbound}|\alpha_{n_i}-\beta^{(i)}|\leq \frac{1}{|q'_{\lambda_ir_i-1}|^2}=\frac{1}{\prod\limits_{j=n_i+1}^{n_i+\lambda_ir_i-1} |a_j|^2}= |a_{n_i}|^2\left(\prod\limits_{j=n_i}^{n_i+r_i-1} |a_j|\right)^{-2\lambda_i}.\end{equation}

Moreover, by Lemma~\ref{Lem: fgHeiHbaib} and \eqref{Eq: Hfni},
\begin{equation}
\begin{split}
\label{Eq: fglowerbound}
|\alpha_{n_i}-\beta^{(i)}|&\geq H(\alpha_{n_i})^{-2}|\beta^{(i)}|^{2-d}|\sigma(\beta^{(i)})|^{-2d+1}\geq\\
&\geq H(\alpha)^{-2}|a_{n_i}|^{2-d}|q'_{r_i-1}|^{-2d+1}\left|\prod\limits_{j=0}^{n_i-1} a_j\right|^{-2d+4}.
\end{split}
\end{equation}
By combining the bounds in \eqref{Eq: fgupperbound} and \eqref{Eq: fglowerbound}, we obtain

\begin{equation*}|a_{n_i}|^{-2}\left(\prod\limits_{j=n_i}^{n_i+r_i-1} |a_j|\right)^{2\lambda_i}\leq H(\alpha)^2|a_{n_i}|^{d-2}|q'_{r_i-1}|^{2d-1}\left|\prod\limits_{j=0}^{n_i-1} a_j \right|^{2d-4}.\end{equation*}
Notice that $|q'_{r_i-1}|=\prod\limits_{j=n_i+1}^{n_i+r_i-1}|a_j|$, as it is the denominator of $[B_{n_i}]=[a_{n_i},\ldots,a_{n_i+r_i-1}]$. Therefore, we obtain
\begin{equation*}
2\lambda_i\sum\limits_{j=n_i}^{n_i+r_i-1} \deg a_j\leq 2\log H(\alpha) + d\deg a_{n_i} +(2d-4)\sum\limits_{j=0}^{n_i-1}\deg a_j+(2d-1)\sum\limits_{j=n_i+1}^{n_i+r_i-1}\deg a_j.
\end{equation*}
Since $\deg a_j \geq 1$ for all $j$, $2d-4<2d-1$ and $d<2d-1$, we obtain
\[2\lambda_i r_i \leq 2\lambda_i\sum\limits_{j=n_i}^{n_i+r_i-1} \deg a_j \leq 2\log H(\alpha) +(2d-1)\sum\limits_{j=0}^{n_i+r_i-1}\deg a_j .\]
For sufficiently large $i$, the main term is $\sum\limits_{j=0}^{n_i+r_i-1}\deg a_j$, so that
\[\lambda_i\ll r_i\lambda_i\ll \sum\limits_{j=0}^{n_i+r_i-1}\deg a_j.\]
By Theorem~\ref{Thm: DRfunfields} and the latter inequalities, we get
\[\log \lambda_i \ll \log\left(\sum\limits_{j=0}^{n_i+r_i-1}\deg a_j \right) \ll \frac{n_i+r_i-1}{\sqrt{\log (n_i+r_i-1)}}\ll\frac{n_i}{\sqrt{\log n_i}}.\]
This means that, for all $i$,
\[\frac{\log \lambda_i \sqrt{\log n_i}}{n_i}< C,\]
for some constant $C$ independent of $i$, and this contradicts the assumption \eqref{Eq: HPquasiperA}. Therefore, $\alpha$ is transcendental.
\end{proof}

To prove part~b) of Theorem~\ref{Thm: mainquasiperiodic}, we first need the following Lemma. 

\begin{Lemma} \label{pro: poly}
Let $\xi$ be a quasi-periodic continued fraction as defined above. Suppose that $|q_i|\le M^i$ for all $i \ge 0$, for a constant $M>1$.
For all $i \ge 0$, let
\[\alpha^{(i)} = [a_0, \ldots, a_{n_i -1}, \overline{a_{n_i}, \ldots, a_{n_i + r_i -1}}].\]
Then, each $\alpha^{(i)}$ is a quadratic irrational and a root of a polynomial \[f_i X^2 + g_i X + h_i\] such that
\[\max\{|f_i|, |g_i|, |h_i|\} < M^{2 n_i + r_i}. 
\]
Moreover, there exists a sequence $l_i$ of non-negative integers such that, by defining
\begin{equation*}
    u_i = T^{l_i}f_i, \qquad
    v_i = T^{l_i}g_i, \qquad 
    w_i = T^{l_i}h_i
\end{equation*}
we have
\[ \max \{|u_i|,|v_i|,|w_i|\} < M^{2n_i+r_i} \]
and
\[ \lim_{i \to +\infty} \max \{|u_i|,|v_i|,|w_i| \} = +\infty. \]
\end{Lemma}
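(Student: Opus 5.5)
We write $\alpha^{(i)}$ through its purely periodic tail and then push the convergent identities back through the preperiod. Let $\beta^{(i)}=[\overline{a_{n_i},\ldots,a_{n_i+r_i-1}}]$. As in the proof of part~a), with $p'_j/q'_j$ the convergents of $[a_{n_i},\ldots]$, the number $\beta^{(i)}$ satisfies
\[
q'_{r_i-1}X^2+(q'_{r_i-2}-p'_{r_i-1})X-p'_{r_i-2}=0 .
\]
It is irrational, because its expansion is infinite, so it is a reduced quadratic. Now
\[
\alpha^{(i)}=\frac{\beta^{(i)}p_{n_i-1}+p_{n_i-2}}{\beta^{(i)}q_{n_i-1}+q_{n_i-2}},
\]
where $p_j/q_j$ are the convergents of $\xi$; these coincide with those of $\alpha^{(i)}$ up to index $n_i+\lambda_ir_i-1$. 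Inverting, we get $\beta^{(i)}=(p_{n_i-2}-q_{n_i-2}\alpha^{(i)})/(q_{n_i-1}\alpha^{(i)}-p_{n_i-1})$. Substituting this into the quadratic and clearing denominators gives $f_iX^2+g_iX+h_i$, whose coefficients are bilinear expressions. For instance,
\[
f_i=q'_{r_i-1}q_{n_i-2}^2-(q'_{r_i-2}-p'_{r_i-1})q_{n_i-2}q_{n_i-1}-p'_{r_i-2}q_{n_i-1}^2 .
\]
This polynomial is nonzero and has $\alpha^{(i)}$ as a root, and $\alpha^{(i)}$ is irrational since its expansion is infinite. So $\alpha^{(i)}$ is quadratic.

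\emph{Height bound.} By ultrametricity, each coefficient has absolute value at most
\[
\max\{|q'_{r_i-1}|,|p'_{r_i-1}|,|q'_{r_i-2}|,|p'_{r_i-2}|\}\cdot\max\{|p_{n_i-1}|,|q_{n_i-1}|,|p_{n_i-2}|,|q_{n_i-2}|\}^2 .
\]
We bound the two factors separately.
\begin{itemize}
\item Since $q'_j$ is a partial product of the $|a_k|$, we have $|q'_{r_i-1}|=|a_{n_i+1}\cdots a_{n_i+r_i-1}|=|q_{n_i+r_i-1}|/|q_{n_i}|$. Moreover $|p'_{r_i-1}|=|a_{n_i}||q'_{r_i-1}|=|q_{n_i+r_i-1}|/|q_{n_i-1}|$. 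Both are at most $M^{n_i+r_i}/|q_{n_i-1}|$.
\item We have $|q_{n_i-1}|\le M^{n_i-1}$, and $|p_{n_i-1}|=|a_0||q_{n_i-1}|$ (assuming $|a_0|\ge1$, otherwise $|p_j|<|q_j|$).
\end{itemize}
Combining, the product is at most $M^{n_i+r_i}\cdot|q_{n_i-1}|\cdot|a_0|^2\le |a_0|^2M^{2n_i+r_i-1}$. The constant $|a_0|^2$ and the strictness of the inequality are the delicate part. I would handle them by rescaling $M$ (replace $M$ by a larger constant, allowed since only the existence of $M$ matters), or by normalizing to $a_0=0$ via $\xi\mapsto\xi-a_0$ at the outset, which changes $f,g,h$ by a bounded factor.

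\emph{The factors $T^{l_i}$.} For the statement about $u_i,v_i,w_i$, the intended role is to force growth while keeping the bound. If $\max\{|f_i|,|g_i|,|h_i|\}$ is already unbounded we take $l_i=0$. In general we choose
\[
l_i=\max\Bigl(0,\ \bigl\lfloor \log\bigl(M^{2n_i+r_i}/\max\{|f_i|,|g_i|,|h_i|\}\bigr)\bigr\rfloor-1\Bigr),
\]
the largest shift keeping the strict inequality $\max\{|u_i|,|v_i|,|w_i|\}<M^{2n_i+r_i}$. Then $\max\{|u_i|,|v_i|,|w_i|\}\ge \mu^{-2}M^{2n_i+r_i}$, which tends to $+\infty$ because $n_i$ is strictly increasing. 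Here $\log$ is base $\mu$, so multiplying by $T$ multiplies the height by exactly $\mu$.

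\emph{Main obstacle.} The hardest step is the careful degree bookkeeping that yields exactly the exponent $2n_i+r_i$ with strict inequality. This amounts to using the identities $|q'_{j}|=|q_{n_i+j}|/|q_{n_i}|$ and the hypothesis $|q_j|\le M^j$ at the right indices. I expect to need a slight enlargement of $M$ to absorb the constants.
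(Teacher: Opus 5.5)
Your proposal is correct and follows essentially the paper's route. The paper writes the same quadratic (up to sign) directly as $f_i=q_{n_i+r_i-1}q_{n_i-2}-q_{n_i+r_i-2}q_{n_i-1}$, etc., enlarges $M$ so that $|p_j|,|q_j|<M^j$ (which is your absorption of $|a_0|^2$), and then pads with an almost maximal power of $T$, just as you do. Your bookkeeping via $|q'_{r_i-1}|=|q_{n_i+r_i-1}|/|q_{n_i}|$ and your choice of $l_i$, which gives the lower bound $\mu^{-2}M^{2n_i+r_i}$ without assuming $M>\mu$, are slightly cleaner variants of the same steps.
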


\begin{proof}
The quadratic irrationals $\alpha^{(i)}$ are  roots of the polynomials $f_i X^2 + g_i X + h_i$, with
\[\begin{aligned}
f_i &= q_{n_i+r_i-1}q_{n_i-2} - q_{n_i+r_i-2}q_{n_i-1}, \\
g_i &= p_{n_i-1}q_{n_i+r_i-2} - p_{n_i-2}q_{n_i+r_i-1} - p_{n_i+r_i-1}q_{n_i-2} + p_{n_i+r_i-2}q_{n_i-1} , \\
h_i &= - p_{n_i+r_i-1}p_{n_i-2} + p_{n_i+r_i-2}p_{n_i-1}.  
\end{aligned}\]
The boundedness of the sequence $(|q_i|^{1/i})_{i\geq1}$ implies that of $(|p_i|^{1/i})_{i\geq1}$. Therefore, enlarging $M$ if necessary, we may assume that \(
|p_i|<M^i\) and \(|q_i|<M^i\) for every $i\geq1$.
Then, the following inequalities hold:
\[
|f_i| \le M^{2 n_i + r_i-3}< M^{2 n_i + r_i}, \ \ |g_i| \le M^{2 n_i + r_i-3}< M^{2 n_i + r_i}, \ \
|h_i| \le M^{2 n_i + r_i-3}< M^{2 n_i + r_i}.  
\]
Let $H_i := \max \{|f_i|,|g_i|,|h_i| \}$. Define
\[ l_i := \max \{ 0,  (2n_i +r_i) \lfloor \log M \rfloor - \lceil \log H_i \rceil -1 \}.\]
Then, $l_i \in \mathbb{Z}$ and $l_i \ge 0$ for all $i \ge 0$ and $u_i, v_i, w_i \in K[T]$.
Without loss of generality, we assume that $M>\mu$. By construction, for all $i$ we have
\[ \frac{M^{2 n_i + r_i}}{\mu^{2 n_i + r_i + 2} \cdot H_i} = \mu^{(2n_i +r_i) ( \log M -1) - ( \log H_i + 1) -1} \le |T^{l_i}| \le \frac{M^{2 n_i + r_i}}{\mu \cdot H_i}.\]
We have
\[\max \{|u_i|, |v_i|, |w_i| \} = |T^{l_i}| \max \{|f_i|, |g_i|, |h_i| \}\]
and for all $i$
\[ \frac{M^{2 n_i + r_i}}{\mu^{2 n_i + r_i + 2}} \le |T^{l_i}| \max \{|f_i|, |g_i|, |h_i| \} \le \frac{M^{2 n_i + r_i}}{\mu} < M^{2 n_i + r_i}.\]
Since $\frac{M^{2 n_i + r_i}}{\mu^{2 n_i + r_i + 2}}\rightarrow +\infty$, this completes the proof.
\end{proof}

\begin{proof}[Proof of Theorem~\ref{Thm: mainquasiperiodic}~b)]

Let us define 
\[\alpha^{(i)} := [a_0, \ldots, a_{n_i -1}, \overline{a_{n_i}, \ldots, a_{n_i + r_i -1}}] \quad \text{for all } i \ge 0.\]
By Lemma~\ref{pro: poly}, $\alpha^{(i)}$ are quadratic irrationals, roots of \[u_i X^2 + v_i X + w_i,\]
where $u_i,v_i,w_i \in K[T]$ satisfy the bounds stated in the above lemma.
Since $\alpha$ and $\alpha^{(i)}$ have the same initial $n_i + \lambda_ir_i$ partial quotients, by Proposition~\ref{Pro: difference}, we have
\begin{equation}\label{eq: ineqNk}
\left | \alpha - \alpha^{(i)} \right | \le \frac{1}{|q_{N_i}|^2},
\end{equation}
where $N_i = n_i + \lambda_ir_i -1$. Since $|q_i| \le M^i$, from Lemma~\ref{pro: poly}
\[H_i := \max \{|u_i|,|v_i|,|w_i|\} < M^{2 n_i + r_i}.\]

Let us suppose $\alpha$ algebraic of degree $d \ge 3$.
We now prove that, for all $\varepsilon>0$ there exists a subsequence $\Lambda \subseteq \mathbb N$ such that 
\begin{equation}\label{eq: epsilon}
H_i^{t} < |q_{N_i}|^2
\end{equation}
for all $i \in \Lambda$, where \(t = \max \{ 3+\varepsilon , d+1 \}.\)
Indeed, we have that $|q_{N_i}| = |q_{n_i + \lambda_i r_i -1}| \ge \mu^{n_i + \lambda_i r_i -1}$ and $H_i<M^{2 n_i + r_i}$.
Since $\limsup_{i \rightarrow + \infty} \frac{\lambda_i}{n_i} = + \infty$, for all $\bar{C} \in \mathbb R$, $\bar{C}>0$, there exists $\Gamma \subseteq \mathbb N$ such that, $\lambda_i > \bar{C} n_i$ for all $i \in \Gamma$. Hence, there exists $\Gamma \subseteq \mathbb N$ such that
\[\log  H_i^t = t \log H_i< t ( 2 n_i + r_i ) \log M< \lambda_i + t r_i \log M,\]
for a suitable choice of $\bar{C}$.
Since $\lim\limits_{i \in \Gamma, \, i \rightarrow + \infty} \lambda_i = + \infty$, we have that for $i \in \Gamma$ and $i \gg 0$
\[
\lambda_i + t r_i \log M < \lambda_i + \lambda_i r_i 
\le 2 \lambda_i r_i < 2(n_i + \lambda_i r_i -1)\le \log(|q_{N_i}|^2).\]
Therefore, there exists $\Lambda \subseteq \Gamma$ such that \eqref{eq: epsilon} holds.

We prove that, for all but finitely many $i$, $v_i \ne 0$. Indeed, let us suppose $v_i =0$ for infinitely many $i \in \Omega \subseteq \Lambda$. Then, $u_i (\alpha^{(i)})^2 + w_i=0$ and $u_i \ne 0$ for all $i \in \Omega$. Let us consider the following linear forms
\[P_1(X,Y) = X, \quad P_2(X,Y) = \alpha^2X + Y.\]
The following hold
\[\begin{aligned}
|P_2(u_i,w_i)| &= \left |\alpha^2 u_i + w_i \right | = \left |\alpha^2 u_i + w_i -((\alpha^{(i)})^2u_i + w_i) \right | \\
&= \left | u_i \right | \cdot \left |\alpha^2 - (\alpha^{(i)})^2 \right | \le H_i \cdot \left |\alpha - \alpha^{(i)} \right | \le \frac{H_i}{\left |q_{N_i} \right |^2},
\end{aligned}\]
where the second last inequality follows assuming, without loss of generality, that $a_0=0$. From this, we obtain that for all $i \in \Omega$
\[\left |P_1(u_i,w_i)P_2(u_i,w_i) \right | \max \{ |u_i|,|w_i| \}^{\varepsilon}<1.\]
Since $( H_i )_{i \in \Omega}$ is unbounded, from the Subspace Theorem $(u_i,w_i)$ lie in a finite union of proper linear subspaces of $K(T)^2$ for all $i \in \Omega$. Hence, there exist $\tilde{u}, \tilde{w} \in K(T)$, not all zero, such that
\[\tilde{u}u_i + \tilde{w}w_i =0 \]
for infinitely many $i \in \Omega$. We may assume that the above relation holds for every $i\in\Omega$. This implies that
\[0 = \tilde{u}u_i + \tilde{w}w_i - \tilde{w}(u_i (\alpha^{(i)})^2 + w_i) = u_i(\tilde{u} - \tilde{w}(\alpha^{(i)})^2 )\]
and so
\[\tilde{u} - \tilde{w}(\alpha^{(i)})^2  =0\]
for all $i \in \Omega$.
Since all the elements $\alpha^{(i)}$, with $i\in\Omega$, are roots of the same nonzero polynomial
\[\tilde{w}X^2-\tilde{u}\in K(T)[X],\]
they take at most two distinct values. Therefore, there exists an infinite subset $\Omega'\subseteq\Omega$ such that
\(\alpha^{(i)}=\alpha^{(j)}\)
for all $i,j\in\Omega'$.
This implies that $\alpha = \alpha^{(i)}$ for $i \in \Omega'$, contradicting the assumption that $\alpha$ is not a quadratic irrational. 
Removing these finitely many indices from $\Lambda$, we may
assume that $v_i\neq0$ for every $i\in\Lambda$.

We define the following linear forms:
\[L_1(X,Y,Z) = X, \quad L_2(X,Y,Z) = Y, \quad L_3(X,Y,Z) = \alpha^2 X + \alpha Y + Z.\]
The following holds
\[\begin{aligned}|L_3(u_i,v_i,w_i)| &= \left |u_i(\alpha^2 - (\alpha^{(i)})^2) + v_i(\alpha- \alpha^{(i)}) \right |\\
&\le \max \left \{ \left |u_i (\alpha^2 - (\alpha^{(i)})^2) \right |, \left | v_i(\alpha- \alpha^{(i)}) \right | \right \} \\
&\le \max \{|u_i|,|v_i|\} \cdot \left |\alpha - \alpha^{(i)} \right | \le \frac{H_i}{|q_{N_i}|^2}\end{aligned}\]
where, in the second last inequality we have used the fact that $a_0=0$ and in the last inequality we have used \eqref{eq: ineqNk}.
For all $i \in \Lambda$ we have
\[|L_1(u_i,v_i,w_i) L_2(u_i,v_i,w_i) L_3(u_i,v_i,w_i)| \max \{|u_i|,|v_i|,|w_i|\}^{\varepsilon} < 1 \]
and so, from the Subspace Theorem, $( H_i )_{i \in \Lambda}$ is bounded or $(u_i,v_i,w_i)$
are contained in a finite union of proper subspaces of $K(T)^3$ for all $i \in \Lambda$.
By Lemma~\ref{pro: poly}, 
\[\lim_{i \to + \infty}\max \{|u_i|,|v_i|,|w_i|\} = + \infty.\] 
Hence, there exist $\bar u,\bar v,\bar w\in K(T)$, not all zero, such that, for infinitely many $i$
\[\bar{u}u_i + \bar{v} v_i + \bar{w} w_i =0.\]
Using the fact that $u_i (\alpha^{(i)})^2 + v_i\alpha^{(i)} + w_i =0$ we have
\[u_i(\bar{u}- \bar{w}(\alpha^{(i)})^2) + v_i(\bar{v} - \bar{w} \alpha^{(i)}) =0\]
from which
\[\frac{u_i}{v_i} = - \frac{\bar{v} - \bar{w} \alpha^{(i)}}{\bar{u} - \bar{w} (\alpha^{(i)})^2}.\]

We have that $\bar{w} \ne 0$.
Indeed, if $\bar{w} = 0$ then $v_i/u_i = h \in K(T)$ and
\[(\alpha^{(i)})^2 + h \alpha^{(i)} + \frac{w_i}{u_i}=0.\]
Let $\theta = \alpha^2 + h \alpha$. Since $\alpha$ is not quadratic, $\theta \not\in K(T)$.
The sequences
\[\left | \frac{w_i}{u_i} \right | = \left | (\alpha^{(i)})^2 + h \alpha^{(i)} \right | \quad \text{and} \quad \left | \frac{v_i}{u_i} \right | = |h| \]
are bounded, because $\alpha^{(i)} \to \alpha$. Therefore, $H_i \ll |u_i|$ and so $|u_i| \to +\infty$.
The following holds
\[\begin{aligned}\left | \theta + \frac{w_i}{u_i} \right | &= \left |\alpha^2 + h \alpha - (\alpha^{(i)})^2  - h \alpha^{(i)} \right |= \left |\alpha - \alpha^{(i)} \right |\cdot \left | \alpha + \alpha^{(i)} + h \right | \\
&< \frac{C^{\ast}}{|q_{N_i}|^2} < \frac{C^{\ast}}{H_i^t}< \frac{C^{\ast}}{|u_i|^t} < \frac{C^{\ast}}{|u_i|^{d+1}}, \end{aligned}\]
where we have used \eqref{eq: epsilon} and that $\left | \alpha + \alpha^{(i)} + h \right |<C^{\ast}$ for a suitable positive constant $C^{\ast}$.
By Theorem~\ref{Thm: Mahler}, $\theta$ is transcendental, and so $\alpha$.
Therefore,
\[\lim_{i \rightarrow + \infty} \frac{u_i}{v_i} = - \frac{\bar{v} - \bar{w} \alpha}{\bar{u} - \bar{w} \alpha^2} = \psi.\]
Since we have supposed $\alpha$ not a quadratic irrational, we have that $\psi \not\in K(T)$.
Hence, $\psi$ is algebraic of degree $2 \le d' \le d$. 

The following equality holds
\begin{equation*}
\left | \frac{u_i}{v_i} - \psi \right | = |\alpha - \alpha^{(i)}| \cdot \frac{|\bar{w}^2 \alpha^{(i)} \alpha + \bar{u}\bar{w} -\bar{v}\bar{w} (\alpha + \alpha^{(i)})|}{|\bar{w}(\alpha^{(i)})^2 - \bar{u}| \cdot |\bar{w} \alpha^2 - \bar{u}|}.
\end{equation*}
Since
\[\lim_{i \rightarrow + \infty}  \frac{\left |\bar{w}^2 \alpha^{(i)} \alpha + \bar{u}\bar{w} -\bar{v}\bar{w} (\alpha + \alpha^{(i)}) \right |}{\left |\bar{w} (\alpha^{(i)})^2 - \bar{u} \right | \cdot \left |\bar{w} \alpha^2 - \bar{u}\right |} = \frac{\left |\bar{w}^2 \alpha^2 + \bar{u}\bar{w} -2\bar{v}\bar{w} \alpha \right |}{\left |\bar{w} \alpha^2 - \bar{u}\right | \cdot \left |\bar{w} \alpha^2 - \bar{u}\right |} = \tilde{C},\]
and since the norm is discrete, for $i\gg0$
\[\left | \frac{u_i}{v_i} - \psi \right | = \tilde{C} \left |\alpha - \alpha^{(i)} \right |.\]

We prove that $|v_i| \to + \infty$. The two sequences
\[\left | \frac{u_i}{v_i} \right | \quad \text{and} \quad \left | \frac{w_i}{v_i} \right | = \left | \frac{u_i}{v_i} (\alpha^{(i)})^2 + \alpha^{(i)}\right |\]
are bounded, and so $H_i \ll |v_i|$. Since $H_i \to + \infty$, so is $|v_i|$.
Therefore, 
\begin{equation}\label{eq: finale}
\left | \frac{u_i}{v_i} - \psi \right | < \frac{\tilde{C}}{|q_{N_i}|^2}< \frac{\tilde{C}}{|v_i|^{t}} \le \frac{\tilde{C}}{|v_i|^{d+1}} \le \frac{\tilde{C}}{|v_i|^{d'+1}} \end{equation}
for $i \gg 0$. Equation~\eqref{eq: finale} contradicts Theorem~\ref{Thm: Mahler}, hence $\alpha$ is either transcendental or quadratic.
\end{proof}

\subsection{Palindromic continued fractions}\label{subsec: pali}

In this section we prove the transcendence of palindromic continued fractions over $ K((T^{-1}))$. Our approach is based on a substantially different application of the Subspace Theorem from that employed in the real setting.

We say that $[0, a_1, a_2, \ldots]$ is a palindromic continued fraction if it begins with arbitrarily long palindromes, i.e., if there exist infinitely many integers $n$ such that $(a_1, \ldots, a_n)$ is a palindrome. Consequently, for each such index $n$, we have $p_n = q_{n-1}$, since
\[ \begin{pmatrix}  a_1 & 1 \\ 1 & 0 \end{pmatrix} \cdots \begin{pmatrix}  a_n & 1 \\ 1 & 0 \end{pmatrix} = \begin{pmatrix}  q_n & q_{n-1} \\ p_n & p_{n-1} \end{pmatrix} \]
and the latter matrix is symmetric because $(a_1, \ldots, a_n)$ is a palindrome.

\begin{proof}[Proof of Theorem~\ref{Thm: pal}]

Let $n$ be such that the word $(a_1, \ldots, a_n)$ is a palindrome.
Since
\[\alpha=\frac{\alpha_{n+1}p_n+p_{n-1}}{\alpha_{n+1}q_n+q_{n-1}}, \quad \alpha-\frac{p_n}{q_n}=\frac{(-1)^{n}}{q_n(\alpha_{n+1}q_n+q_{n-1})},\]
we obtain
\[\left|q_n\alpha-p_n\right|=\left|\frac{1}{\alpha_{n+1}q_n+q_{n-1}}\right|=\left|\frac{1}{\alpha_{n+1}q_n}\right|.\]
Similarly, from
\[\alpha^{-1}=\frac{\alpha_{n+1}q_n+q_{n-1}}{\alpha_{n+1}p_n+p_{n-1}}, \quad \alpha^{-1}-\frac{q_n}{p_n}=\frac{(-1)^{n+1}}{p_n(\alpha_{n+1}p_n+p_{n-1})}\]
we get
\[\left|p_n\alpha^{-1}-q_n\right|=\left|\frac{1}{\alpha_{n+1}p_n}\right|.\]

Let us suppose that $\alpha$ is algebraic and let us consider the linear forms
\begin{equation*}
L_1(X,Y,Z)=Z\alpha-X,\quad
L_2(X,Y,Z)=Z\alpha^{-1}-Y,\quad
L_3(X,Y,Z)=Z.
\end{equation*}

Then, since $a_n=a_1$ and $p_n=q_{n-1}$,
\begin{align*}
\prod\limits_{i=1}^3 | L_i(p_{n-1},q_n,q_{n-1}) |&=|q_{n-1}\alpha-p_{n-1}|\cdot|q_{n-1}\alpha^{-1}-q_{n}|\cdot|q_{n-1}|=\\
&=\frac{1}{|q_{n-1}\alpha_n|}\cdot\frac{1}{|q_{n-1}\alpha_{n+1}|}\cdot|q_{n-1}|=\frac{1}{|q_{n-1}\alpha_n\alpha_{n+1}|}<\\
&<\frac{1}{|q_{n-1}|}=\frac{|a_n|}{|q_{n}|}=\frac{|a_1|}{|q_{n}|}=\frac{|a_1||q_{n}|^{\varepsilon-1}}{|q_{n}|^{\varepsilon}}<\frac{1}{|q_{n}|^{\varepsilon}}=\\
&=\max\{|p_{n-1}|,|q_{n}|,|q_{n-1}|\}^{-\varepsilon},
\end{align*}
for $0<\varepsilon<1$ and $n$ sufficiently large. Since the absolute values of $p_{n-1},q_{n-1},q_{n}$ are unbounded, by Theorem~\ref{Thm: Ratliff} they lie in a finite union of proper linear subspaces of $K(T)^3$. This means that
\[up_{n-1}+vq_{n-1}+wq_n=0,\]
for some $u,v,w\in K[T]$, not all zero, and for infinitely many $n$, that is
\[u\frac{p_{n-1}}{q_{n-1}}+v+w\frac{q_n}{q_{n-1}}=0.\]
Recalling that
\begin{align*}
\frac{p_{n-1}}{q_{n-1}}&=\alpha-\frac{(-1)^{n-1}}{q_{n-1}(q_{n-1}\alpha_n+q_{n-2})}=\alpha-x_n,\\
\frac{q_{n}}{q_{n-1}}&=\alpha^{-1}-\frac{(-1)^{n+1}}{p_{n}(p_{n}\alpha_{n+1}+p_{n-1})}=\alpha^{-1}-y_n.
\end{align*}
We obtain
\[u\alpha+v+w\alpha^{-1}=ux_n+wy_n.\]
Letting $n\to+\infty$ and using
\[\lim_{n\to+\infty}x_n=\lim_{n\to+\infty}y_n=0,\]
we conclude that
\[u\alpha+v+w\alpha^{-1}=0.\]
Therefore,
\[u\alpha^2+v\alpha+w=0\]
and consequently $\alpha$ is quadratic or transcendental.
\end{proof}

\section{Future research}\label{sec: spunti}
In this section, we present a promising research direction that may lead to more general results.
First observe that, if $\alpha \in K((T^{-1}))$ is transcendental and $a/b \in K(T)$ with $a/b \ne 0$, then $\frac{a}{b}\alpha$ is also transcendental. Moreover, for continued fractions over function fields, one can derive recursively a continued fraction expansion of $\frac{a}{b}\alpha$.
In \cite{Gri} and \cite[Section~1.3.4]{Mala} the authors present a recursive algorithm for expressing the partial quotients of a continued fraction expansion of $\frac{a}{b} \alpha$ in terms of the partial quotients of $\alpha$, where $\alpha$ is a Laurent series and $a/b$ a rational function. 
In particular, Malagoli presents it in \cite[Theorem~1.3.26]{Mala}, together with examples and special cases, while Grisel presents and proves it in \cite[Theorem~3]{Gri}.
This provides an advantage over the real case, where finding the continued fraction expansion of $\frac{a}{b}\alpha$ from that of $\alpha$ is not straightforward.
In \cite[Remark~1.3.28]{Mala} the formula takes a particularly simple form when $a/b=(T- \lambda)$, where $\lambda \in K$, as shown in the following proposition.

\begin{Proposition}[\cite{Mala}]\label{prop: t-lambda}
Let $\alpha \in K((T^{-1}))$, with $\alpha = [a_0,a_1,a_2, \ldots]$ and let $a_n' \in K[T]$ for all $n \ge 0$ such that
\[a_n = (T- \lambda) a_n' + a_n(\lambda)\]
for all $n \geq 0$. Let $q_n \in K[T]$ be the denominator of the $n$-th convergent of $\alpha$. Then, if $q_n(\lambda) \ne 0$ for all $n \ge 0$, we have
\[(T- \lambda) \alpha = [b_0, b_1, b_2, \ldots]\]
where
\[\begin{cases}
b_0 = (T- \lambda)a_0, \\
b_{2n+1} = (-1)^nq_n(\lambda)^2a_{n+1}' &\quad \text{for all} \quad n \ge 0, \\
b_{2n} = (-1)^{n-1}(q_{n-1}q_{n})(\lambda)^{-1} (T - \lambda) &\quad \text{for all} \quad n \ge 1.
\end{cases}\]
\end{Proposition}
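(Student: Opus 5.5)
We must prove Proposition~\ref{prop: t-lambda}: with $\alpha=[a_0,a_1,\ldots]$ and $q_n(\lambda)\neq 0$ for all $n$, the expansion of $(T-\lambda)\alpha$ is $[b_0,b_1,\ldots]$ as stated. Put $c_n:=q_n(\lambda)$ and $\pi=T-\lambda$.

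The approach is to verify that $[b_0,b_1,\ldots]$ has two properties:
\begin{enumerate}
\item Its $2n$-th convergent equals $\pi\,p_n/q_n$ (with $p_n/q_n$ the convergents of $\alpha$), up to a nonzero constant scaling of numerator and denominator.
\item Each $b_k$ with $k\ge1$ has positive degree.
\end{enumerate}
The second property shows that $[b_0,b_1,\ldots]$ is a genuine (regular) continued fraction; by uniqueness of the expansion in $K((T^{-1}))$ it then must be the expansion of its limit. The first property identifies that limit as $\lim_n \pi p_n/q_n=\pi\alpha$.

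Degrees are easy: $\deg b_{2n}=1$ since $c_{n-1}c_n\neq 0$. Also $\deg a'_{n+1}=\deg a_{n+1}-1\ge 0$. Here there is a subtlety: when $\deg a_{n+1}=1$ we get $\deg b_{2n+1}=0$. We have to check whether the convention allows constant partial quotients. In the regular expansion, $a_i$ must have positive degree for $i\ge1$. So either the statement implicitly requires $\deg a_n\ge2$, or we accept a ``semi-regular'' expansion and only prove the value identity. I would prove the value identity first and treat regularity as a separate remark. When all $\deg a_{n}\ge 2$ the expansion is regular, and the same computation applies otherwise with $[\cdot]$ read formally.

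The core is an induction on $n$ for the convergents $P_k/Q_k$ of $[b_0,b_1,\ldots]$. The claim to prove is
\[
P_{2n}=\varepsilon_n\,\pi p_n,\qquad Q_{2n}=\varepsilon_n q_n,\qquad P_{2n-1}=\varepsilon'_n\,\frac{\pi p_{n-1}c_n - \pi p_n c_{n-1}\,(\ast)}{\pi}\ \text{(a polynomial)},
\]
where $\varepsilon_n,\varepsilon'_n\in K^\times$ are explicit signs times powers of the $c_j$. I would guess $\varepsilon_n=(-1)^{\lfloor n/2\rfloor}c_n^{-1}$ or similar, and fix it from the cases $n=0,1$.

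Rather than guess the odd-index formulas, I would work with $2\times2$ matrices. Set
\[
M(x)=\begin{pmatrix}x&1\\1&0\end{pmatrix}.
\]
The goal is the identity
\[
M(b_{2n+1})M(b_{2n+2})=D_n^{-1}\,\begin{pmatrix}\pi&0\\0&1\end{pmatrix}\ldots
\]
More concretely: conjugate $M(a_{n+1})$ by a suitable matrix of the form $\begin{pmatrix}\pi&\ast\\0&1\end{pmatrix}$ scaled by constants. The key algebraic input is the relation $q_{n+1}=a_{n+1}q_n+q_{n-1}$. Substituting $a_{n+1}=\pi a'_{n+1}+a_{n+1}(\lambda)$ and evaluating at $\lambda$ gives $c_{n+1}=a_{n+1}(\lambda)c_n+c_{n-1}$, which is exactly what lets the constant parts cancel. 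I expect the product of the two $b$-matrices to reproduce $M(a_{n+1})$ after a change of basis involving $\pi$ and the $c_j$.

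The main obstacle is this bookkeeping: getting the signs $(-1)^n$ and the normalizing constants $c_n^2$, $(c_{n-1}c_n)^{-1}$ to match exactly. I would first check $n=0$ and $n=1$ by hand to pin down the invariant, then run the induction using $c_{n+1}=a_{n+1}(\lambda)c_n+c_{n-1}$ and the determinant identity $p_nq_{n-1}-q_np_{n-1}=(-1)^n$.
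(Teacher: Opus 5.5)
The paper gives no proof of this proposition. It is quoted from \cite[Remark~1.3.28]{Mala}, a special case of the general algorithm of \cite{Gri}, so your argument has to stand on its own. Its overall plan works: match the even convergents of $[b_0,b_1,\ldots]$ with $\pi p_n/q_n$, check $\deg b_k\ge 1$ for $k\ge 1$, and invoke uniqueness of regular expansions. Your degree caveat is correct, and it points to a real omission in the statement as quoted. If $\deg a_1=1$, then $|\pi\alpha-\pi a_0|=\mu/|a_1|=1$, so already $b_0\neq\lfloor\pi\alpha\rfloor$. More generally, $b_{2n+1}$ is constant exactly when $\deg a_{n+1}=1$. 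One needs $\deg a_n\ge 2$ for all $n\ge 1$; this is exactly what Remark~\ref{rem: distincpc} assumes, so nothing downstream is affected.

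What you have not supplied is the core identity, which you leave as an expectation, so the proposal is not yet a proof. It cannot be a conjugation of $M(a_{n+1})$ by one fixed matrix. To telescope, you need $n$-dependent matrices on the two sides. With $c_{-1}=0$, $c_0=1$ and
\[
N_n=\begin{pmatrix}(-1)^n c_n^2/\pi & (-1)^n c_n c_{n-1}/\pi\\ 0 & 1\end{pmatrix},
\]
direct multiplication gives
\[
M(b_0)N_0=\frac{1}{\pi}\begin{pmatrix}\pi&0\\0&1\end{pmatrix}M(a_0),\qquad M(b_{2n+1})M(b_{2n+2})N_{n+1}=-\frac{c_{n+1}}{c_n}\,N_nM(a_{n+1}).
\]
In the second identity, three entries agree identically. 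The $(1,1)$ entries agree if and only if $\pi a'_{n+1}c_n+c_{n+1}=a_{n+1}c_n+c_{n-1}$, i.e.\ $c_{n+1}=a_{n+1}(\lambda)c_n+c_{n-1}$. So, as you predicted, this recurrence is the only input, and the determinant identity is not needed. Telescoping yields
\[
M(b_0)\cdots M(b_{2n})N_n=(-1)^nc_n\pi^{-1}\operatorname{diag}(\pi,1)M(a_0)\cdots M(a_n),
\]
hence
\[
P_{2n}=\frac{\pi p_n}{c_n},\quad Q_{2n}=\frac{q_n}{c_n},\quad P_{2n-1}=(-1)^n(c_np_{n-1}-c_{n-1}p_n),\quad Q_{2n-1}=(-1)^n\frac{c_nq_{n-1}-c_{n-1}q_n}{\pi}.
\]
So $\varepsilon_n=c_n^{-1}$, with no sign.

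You can also skip the uniqueness argument and run the algorithm directly. Apply the telescoped identity to the column vector $(\alpha_{n+1},1)$ instead of $(1,0)$. This gives $\pi\alpha=[b_0,\ldots,b_{2n},\tau_{2n+1}]$ with $\tau_{2n+1}=(-1)^nc_n(c_n\alpha_{n+1}+c_{n-1})/\pi$. From this you read off
\[
|\tau_{2n+1}-b_{2n+1}|=\mu^{-1}\quad\text{and}\quad|\tau_{2n}-b_{2n}|=1/|\tau_{2n+1}|=\mu/|a_{n+1}|.
\]
The first uses $c_n,c_{n+1}\neq 0$. The second is $<1$ exactly when $\deg a_{n+1}\ge 2$. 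This shows precisely where each hypothesis enters.
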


Using Proposition~\ref{prop: t-lambda}, we can construct an infinite family of transcendental continued fractions having partial quotients pairwise distinct and of bounded norm. Therefore, this family consists of continued fractions that are neither automatic, palindromic, quasi-periodic, nor of Liouville-type.

\begin{remark}\label{rem: distincpc}
Let $\alpha \in K((T^{-1}))$ ($\operatorname{char}(K)=0$) with
\[
\alpha=[a_0,a_1,a_2,\ldots],
\]
where $a_n\in\mathbb{Z}[T]$ for all $n\ge0$, each $a_n$ is monic, and
$\deg(a_n)\ge2$ for every $n\ge1$. Then, for every
$\lambda\in\mathbb{Q}\setminus\mathbb{Z}$, the partial quotients of $(T-\lambda)\alpha$ are pairwise distinct.

Indeed, since $a_n\in\mathbb{Z}[T]$, every denominator $q_n$ of the convergents
belongs to $\mathbb{Z}[T]$ and is monic. Hence, by the Rational Root Theorem,
$q_n(\lambda)\neq0$ for every $\lambda\in\mathbb{Q}\setminus\mathbb{Z}$.
Moreover, for $0\le n<m$, the polynomials
\[
q_{m-1}q_m\pm q_{n-1}q_n,\qquad
q_m^2\pm q_n^2,\qquad
q_m^2q_{n-1}q_n\pm1,\qquad
q_n^2q_{m-1}q_m\pm1
\]
are monic elements of $\mathbb{Z}[T]$. Therefore, they do not vanish at any
$\lambda\in\mathbb{Q}\setminus\mathbb{Z}$ by Rational Root Theorem.
It follows that the leading coefficients of the partial quotients of
$(T-\lambda)\alpha$ are pairwise distinct, and consequently the partial
quotients themselves are pairwise distinct.
Thus, starting from a transcendental continued fraction with bounded partial quotients, we obtain another transcendental continued fraction whose partial quotients remain bounded and are pairwise distinct.
\end{remark}
The results of this paper and the above discussion naturally lead to the following open problem.
\begin{Problem}
Characterize the classes of transcendental continued fractions obtained from those considered in this paper by multiplication by a rational function $a/b$, where $a, b \in K[T]$ and $b \ne 0$.
\end{Problem}

\end{document}